\documentclass[11pt]{amsart}
\usepackage{amscd}
\usepackage{amsfonts}
\usepackage{amsmath}
\usepackage{amssymb}
\usepackage{bbm}
\usepackage{blkarray}
\usepackage{enumerate}
\usepackage{extarrows}
\usepackage{fancybox}
\usepackage{graphicx}
\usepackage{indentfirst}
\usepackage{mathbbol}
\usepackage{mathrsfs}
\usepackage{mathtools}
\usepackage{picinpar}
\usepackage{url}
\usepackage{varioref}
\usepackage{wrapfig}
\usepackage{xcolor}
\usepackage[colorlinks, linkcolor=black, anchorcolor=black, citecolor=black, urlcolor=black]{hyperref}


\newtheorem{thm}{Theorem}[section]
\newtheorem{prop}[thm]{Proposition}
\newtheorem{lem}[thm]{Lemma}

\theoremstyle{definition}

\theoremstyle{remark}
\newtheorem*{rmk}{Remark}
\theoremstyle{definition}
\newtheorem{defn}[thm]{Definition}


\begin{document}

\title[]{A generalization of Nadel vanishing theorem}

\date{}

\thanks{}

\author[Xiankui Meng]{Xiankui Meng}
\address{Xiankui Meng: School of Science, Beijing University of Posts and Telecommunications, Beijing 100876, China.}
\email{mengxiankui@amss.ac.cn}

\author[Xiangyu Zhou]{Xiangyu Zhou}
\address{Xiangyu Zhou: Institute of Mathematics, AMSS, and Hua Loo-Keng Key Laboratory of Mathematics, Chinese Academy of Sciences, Beijing 100190, China.}
\email{xyzhou@math.ac.cn}

\subjclass[2010]{}

\keywords{}

\begin{abstract}
In this paper we first prove a version of $L^{2}$ existence theorem for line bundles equipped a singular Hermitian metrics. 
Aa an application, we establish a vanishing theorem which generalizes the classical Nadel vanishing theorem.
\end{abstract}

\maketitle


\section{Introduction}

The fundamental $L^{2}$ estimates concerning solutions of the Cauchy-Riemann operators are essentially due to H\"ormander (\cite{Hor65}).
The best way of expressing these $L^{2}$ estimates is to use a geometric setting first considered by Andreotti-Vesentini (\cite{An-Ve65}).
In \cite{Oh84}, Ohsawa proved his $L^{2}$ vanishing theorem for smooth Hermitian vector bundle $(E,h)$ in order to obtain his cohomology vanishing theorems. 
By regularization arguments, we will show that Ohsawa's $L^{2}$ vanishing theorem is also valid when $E$ is a line bundle and $h$ is a singular Hermitian metric with positive curvature in the sense of currents. 

A singular Hermitian metric $h$ on a holomorphic line bundle $L\longrightarrow X$ is a metric which is given in any local trivialization
$\theta: L\upharpoonright\Omega\longrightarrow\Omega\times\mathbb{C}$ by
\begin{equation*}
  \|\xi\|_{h}=|\theta(\xi)|e^{-\varphi(x)}, \quad x\in\Omega, \quad \xi\in L_{x},
\end{equation*}
where $\varphi\in L_{loc}^{1}(\Omega)$ is an arbitrary function,
called the weight of the metric with respect to the trivialization $\theta$.
A complex manifold $X$ is said to be weakly pseudoconvex if there exists a smooth psh function $\psi$ on $X$ such that for any $c\in\mathbb{R}$, all level sets
\[X_{c}=\{x\in X;\quad \psi(x)<c\}\]
are relatively compact.
The first result of this paper is the following theorem.
\begin{thm}\label{thm:intro1}
Let $X$ be a weakly pseudoconvex K\"ahler manifold of dimension $n$ 
and let $\sigma$ be a $d$-closed semi-positive $(1,1)$ form on $X$.
Let $L$ be a line bundle over $X$ and let $h$ be a singular hermitian metic on $L$ such that its curvature current $\mathrm{i}\Theta_{L,h}\geqslant\alpha\sigma$ (in the sense of distributions) for some continuous positive function $\alpha$ on $X$. Suppose $q\geqslant 1$, then for any $\bar{\partial}$-closed $L$-valued $(n,q)$ form $f$ which is local square integrable and
\[\lim_{\varepsilon\to 0}\int_{X}\frac{1}{\alpha}\left|f\right|^{2}_{\sigma+\varepsilon\omega,h}\mathrm{d}V_{\sigma+\varepsilon\omega}<+\infty.
\]
we can find an $L$-valued $(n,q-1)$ form $u$ such that $\bar{\partial}u=f$ and
\begin{equation}
\left\|u\right\|_{\sigma,h}^{2}=\lim_{\varepsilon\to0}\int_{X}\left|u\right|^{2}_{\sigma+\varepsilon\omega,h}\mathrm{d}V_{\sigma+\varepsilon\omega}\leqslant\lim_{\varepsilon\to 0}\int_{X}\frac{1}{q\alpha}\left|f\right|^{2}_{\sigma+\varepsilon\omega,h}\mathrm{d}V_{\sigma+\varepsilon\omega}.
\end{equation}
\end{thm}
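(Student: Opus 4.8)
The plan is to run the classical Hörmander--Andreotti--Vesentini $L^{2}$ machinery with the family of honest Kähler metrics $\omega_{\varepsilon}:=\sigma+\varepsilon\omega$ and then let $\varepsilon\to0$; the degenerate ``$\sigma$-norms'' appearing in the statement are, by design, recovered only in this limit. \emph{Step 1: solving $\bar{\partial}$ for fixed $\varepsilon$.} For $\varepsilon>0$ the form $\omega_{\varepsilon}=\sigma+\varepsilon\omega$ is a Kähler metric on $X$ (closed because $\sigma$ and $\omega$ are, positive because $\sigma\geqslant0$). On the weakly pseudoconvex Kähler manifold $(X,\omega_{\varepsilon})$ I would set up the Bochner--Kodaira--Nakano inequality and Hörmander's $L^{2}$ estimate in the usual way, handling non-compactness by exhausting $X$ with the sublevel sets $X_{c}=\{\psi<c\}$ of a smooth psh exhaustion (equivalently, by replacing $\omega_{\varepsilon}$ with the complete Kähler metric $\omega_{\varepsilon}+\mathrm{i}\partial\bar{\partial}(\chi\circ\psi)$ and then letting $\chi$ grow), and handling the singularity of $h$ by Demailly's regularization of the local weight together with the density of smooth forms in the graph norm of $\bar{\partial}\oplus\bar{\partial}^{*}$. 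This is precisely Ohsawa's $L^{2}$ setup, now for a singular metric on a line bundle.

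\emph{Step 2: the curvature operator and the $\varepsilon$-estimate.} On $L$-valued $(n,q)$-forms the curvature operator $B_{\varepsilon}:=[\mathrm{i}\Theta_{L,h},\Lambda_{\omega_{\varepsilon}}]$ depends monotonically on the curvature form, so $\mathrm{i}\Theta_{L,h}\geqslant\alpha\sigma$ forces $B_{\varepsilon}\geqslant\alpha[\sigma,\Lambda_{\omega_{\varepsilon}}]\geqslant0$. Working at a point in coordinates that simultaneously diagonalize $\sigma=\mathrm{i}\sum s_{j}\,dz_{j}\wedge d\bar{z}_{j}$ ($s_{j}\geqslant0$) and $\omega$, the operator $[\sigma,\Lambda_{\omega_{\varepsilon}}]$ multiplies the component $f_{K}$ ($|K|=q$) by $\sum_{j\in K}s_{j}/(s_{j}+\varepsilon)$. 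Since $B_{\varepsilon}$ is only semi-definite I would apply Hörmander's estimate in the form in which $\langle B_{\varepsilon}^{-1}f,f\rangle$ is read as $\sup_{v}|\langle f,v\rangle|^{2}/\langle B_{\varepsilon}v,v\rangle$; dominating $B_{\varepsilon}$ below by $\alpha[\sigma,\Lambda_{\omega_{\varepsilon}}]$ then produces, \emph{provided the right-hand side is finite}, an $L$-valued $(n,q-1)$-form $u_{\varepsilon}$ with $\bar{\partial}u_{\varepsilon}=f$ and
\[
\|u_{\varepsilon}\|_{\omega_{\varepsilon},h}^{2}\leqslant C_{\varepsilon}(f):=\int_{X}\Big\langle\big(\alpha[\sigma,\Lambda_{\omega_{\varepsilon}}]\big)^{-1}f,f\Big\rangle_{\omega_{\varepsilon},h}\,\mathrm{d}V_{\omega_{\varepsilon}},
\]
whose integrand, in the above coordinates, is $\sum_{K}\dfrac{|f_{K}|_{h}^{2}}{\alpha\sum_{j\in K}s_{j}/(s_{j}+\varepsilon)}\prod_{j\in K}\dfrac{1}{s_{j}+\varepsilon}$ times the Euclidean volume. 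Now the finiteness assumption $\lim_{\delta\to0}\int_{X}\tfrac1{\alpha}|f|_{\omega_{\delta},h}^{2}\,\mathrm{d}V_{\omega_{\delta}}<\infty$, read through monotone convergence, forces $f_{K}=0$ a.e.\ on $\{\exists\,j\in K:\ s_{j}=0\}$; off that set $\sum_{j\in K}s_{j}/(s_{j}+\varepsilon)\uparrow q$ as $\varepsilon\downarrow0$, and bounding $\sum_{j\in K}s_{j}/(s_{j}+\varepsilon)$ below by the term with the largest $s_{j}$ shows that the integrand of $C_{\varepsilon}(f)$ is dominated by the integrable function $\sum_{K}\tfrac{1}{\alpha}|f_{K}|_{h}^{2}\prod_{j\in K}s_{j}^{-1}$. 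Hence $C_{\varepsilon}(f)$ is finite and uniformly bounded in $\varepsilon$, the solution $u_{\varepsilon}$ exists, and dominated convergence gives $\lim_{\varepsilon\to0}C_{\varepsilon}(f)=\lim_{\delta\to0}\int_{X}\tfrac{1}{q\alpha}|f|_{\omega_{\delta},h}^{2}\,\mathrm{d}V_{\omega_{\delta}}$.

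\emph{Step 3: the limit $\varepsilon\to0$.} As $\varepsilon$ decreases the densities of $(n,q-1)$-forms $|w|_{\omega_{\varepsilon},h}^{2}\,\mathrm{d}V_{\omega_{\varepsilon}}$ increase (the relevant weights are $\prod_{j\in L}(s_{j}+\varepsilon)^{-1}$), so for $\varepsilon\leqslant\varepsilon_{0}$ one has $\|u_{\varepsilon}\|_{\omega_{\varepsilon_{0}},h}\leqslant\|u_{\varepsilon}\|_{\omega_{\varepsilon},h}\leqslant C_{\varepsilon}(f)^{1/2}$, a bound uniform in $\varepsilon$. A diagonal extraction over $\varepsilon_{0}=1/k$ produces a sequence $\varepsilon_{i}\to0$ and an $L$-valued $(n,q-1)$-form $u$ with $u_{\varepsilon_{i}}\rightharpoonup u$ weakly in every $L^{2}_{\omega_{\varepsilon_{0}},h}$; since $\bar{\partial}u_{\varepsilon_{i}}=f$ and $\bar{\partial}$ is continuous for the weak topology, $\bar{\partial}u=f$. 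Lower semicontinuity of the norm under weak limits gives, for each $\varepsilon_{0}$, $\|u\|_{\omega_{\varepsilon_{0}},h}^{2}\leqslant\liminf_{i}\|u_{\varepsilon_{i}}\|_{\omega_{\varepsilon_{i}},h}^{2}\leqslant\lim_{\varepsilon\to0}C_{\varepsilon}(f)$, and letting $\varepsilon_{0}\to0$ (so that $\|u\|_{\omega_{\varepsilon_{0}},h}^{2}\uparrow\|u\|_{\sigma,h}^{2}$ by monotone convergence) yields the asserted inequality.

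\emph{Main obstacle.} The delicate part is the whole $\varepsilon\to0$ passage: one must simultaneously control $C_{\varepsilon}(f)$ by the prescribed finite quantity --- which hinges on the finiteness hypothesis confining $f$ to the locus where $\sigma$ is non-degenerate --- reconcile the weak limits taken with respect to the moving, degenerating metrics $\omega_{\varepsilon}$, and, back in Step 1, legitimately invoke the fixed-$\varepsilon$ existence theorem for a merely semi-positive curvature operator, a singular metric, and a non-compact weakly pseudoconvex base (the regularization and completeness bookkeeping).
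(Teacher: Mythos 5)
Your overall route coincides with the paper's: approximate the degenerate metric by the honest K\"ahler metrics $\sigma+\varepsilon\omega$, prove an $\varepsilon$-uniform H\"ormander estimate whose constant $\tfrac{1}{q\alpha}$ comes from the pointwise eigenvalue computation (your $\sum_{j\in K}\lambda_j\geqslant q\prod_{j\in K}\lambda_j$ for $\lambda_j\leqslant 1$ is exactly the paper's comparison of $\langle A^{-1}g,g\rangle\,\mathrm{d}V$ with $\tfrac{1}{q\alpha}|g|^2_{\omega_{\varepsilon'}}\mathrm{d}V_{\omega_{\varepsilon'}}$), and then extract a weak limit using the monotonicity of $(n,q)$-norms under $\omega_1\geqslant\omega_2$, lower semicontinuity, and monotone convergence. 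Steps 2 and 3 of your sketch --- the domination of $C_\varepsilon(f)$ by the monotone limit of the hypothesis, the observation that finiteness forces $f_K=0$ on the degeneracy locus, and the diagonal extraction --- are correct and essentially identical to what the paper does.

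The genuine gap is the one you relegate to the ``main obstacle'' paragraph, and it is not a bookkeeping issue: it is where most of the paper's proof lives. The Bochner--Kodaira--Nakano identity and the lower bound $B_\varepsilon\geqslant\alpha[\sigma,\Lambda_{\omega_\varepsilon}]$ require a \emph{smooth} metric, and Demailly's regularization (Theorem 2.5 of the paper) does \emph{not} produce smooth approximants with curvature $\geqslant\alpha\sigma$. It produces, only on relatively compact sets $X_c$, quasi-psh weights $\varphi_\nu$ that are still singular along analytic sets $Z_\nu$ and satisfy only $\mathrm{i}\Theta_{L,h_\nu}\geqslant\alpha\sigma-\beta_\nu\omega$ with $\beta_\nu\downarrow 0$. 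Three consequences follow that your sketch does not absorb. First, the a priori inequality acquires the extra term $q\beta_\nu\|v\|^2$, so the duality argument does not solve $\bar{\partial}u=f$ but only the perturbed system $\bar{\partial}u_\nu+\sqrt{q\beta_\nu}\,w_\nu=f$ with a uniform bound on $\|u_\nu\|^2+\|w_\nu\|^2$; one must then let $\nu\to\infty$ and argue that the error term disappears in the weak limit. Your clean bound $B_\varepsilon\geqslant\alpha[\sigma,\Lambda_{\omega_\varepsilon}]\geqslant 0$ is simply not available for any usable approximant. Second, since $h_\nu$ is singular along $Z_\nu$, one must work on $X_c\setminus Z_\nu$ with an auxiliary complete K\"ahler metric there (the paper's $\omega_{\varepsilon,\delta}=\sigma+\varepsilon\omega+\delta\omega_c$, with $\delta\to 0$ taken before $\nu\to\infty$), and then extend the resulting equation across $Z_\nu$ by the $L^2$ removable-singularity lemma (Lemma 2.4); ``density of smooth forms in the graph norm'' does not substitute for this. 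Third, because the regularization lives only on $X_c$, a further exhaustion limit $c\to\infty$ is needed before the final $\varepsilon\to 0$. None of these steps is impossible --- the paper carries them all out --- but as written your Step 1 asserts the conclusion of the hard part rather than proving it, so the proposal is an accurate outline with its central technical step left open.
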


The second part of this paper is to consider sheaf cohomology groups with multiplier ideal sheaves. Vanishing theorems are important in both complex geometry and algebraic geometry. Various generalizations of the classical Kodaira vanishing theorem and the Nadel vanishing theorem are great developments in this direction.
For general Nadel type vanishing theorem, one may refer to \cite{Cao14,Guan-Zhou15a}.

Let $\psi$ be a quasi-psh function on a complex manifold $X$.
The multiplier ideal sheaf $\mathcal{I}(\psi)$
is the ideal subsheaf of $\mathcal{O}_{X}$ defined by
$$\mathcal{I}(\psi)_{x}=\{f\in\mathcal{O}_{X,x};~\exists ~U \ni x ~\text{such that}~\int_{U} |f|^{2}e^{-2\psi}d\lambda<\infty \},$$
where $U$ is an open coordinate neighborhood of $x$,
and $d\lambda$ is the standard Lebesgue measure in $\mathbb{C}^{n}$.
It's well-known that $\mathcal{I}(\psi)$ is a coherent analytic sheaf.
Let $h$ be a singular Hermitian metric on $L\longrightarrow X$ with curvature current $\mathrm{i}\Theta\geqslant 0$. Then we can write $h=h_{0}e^{-2\psi}$, where $h_{0}$ is a smooth Hermitian metric on $L$ and $\psi$ is quasi-psh. We can define $\mathcal{I}(h)=\mathcal{I}(\psi)$.
Using Theorem \ref{thm:intro1}, we can prove the following statement which generalizes Ohsawa's Vanishing theorem (Theorem 3.1 in \cite{Oh84}) and Nadel vanishing theorem.

\begin{thm}\label{thm:vanishing}
Let $X$ be a weakly pseudoconvex K\"ahler manifold.
let $f:X\to Y$ be a proper holomorphic map to a paracompact complex space $Y$ with a K\"ahler metric $\sigma$, and let $L$ be a line bundle over X equipped with a singular Hermitian metric. Assume that $\mathrm{i}\Theta_{L,h}\geqslant\varepsilon\cdot f^{\ast}\sigma$ (in the sense of current) for some continuous positive function $\varepsilon$ on $X$, then
\[H^{q}\left(Y,f_{\ast}\left(K_{X}\otimes L\otimes\mathcal{I}(h)\right)\right)=0\quad\text{for}\quad q\geqslant 1.\]
\end{thm}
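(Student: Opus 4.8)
The plan is to realise $f_*(K_X\otimes L\otimes\mathcal I(h))$ as the kernel of a fine resolution built from square-integrable forms and then to annihilate the resulting $\bar\partial$-cohomology by Theorem \ref{thm:intro1}. Fix a K\"ahler metric $\omega$ on $X$. For an open set $V\subseteq Y$ let $\mathscr S^{q}(V)$ be the space of $L$-valued $(n,q)$-forms $u$ on $f^{-1}(V)$ such that $u$ and $\bar\partial u$ are square integrable in the limiting sense
\[
\lim_{t\to0}\int_{f^{-1}(W)}|u|^{2}_{f^{*}\sigma+t\omega,h}\,\mathrm dV_{f^{*}\sigma+t\omega}<+\infty \quad\text{for every } W\Subset V .
\]
These define sheaves $\mathscr S^{q}$ on $Y$; multiplying by pullbacks $f^{*}\chi$ of a smooth partition of unity on $Y$ preserves both conditions, so each $\mathscr S^{q}$ is a fine, hence soft, sheaf on $Y$. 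Since for an $(n,0)$-form the integrand above is independent of the auxiliary metric and equals its pointwise $h$-length, the kernel of $\bar\partial\colon\mathscr S^{0}\to\mathscr S^{1}$ is exactly $f_*(K_X\otimes L\otimes\mathcal I(h))$. It therefore suffices to prove that
\[
0\longrightarrow f_*\!\left(K_X\otimes L\otimes\mathcal I(h)\right)\longrightarrow\mathscr S^{0}\xrightarrow{\ \bar\partial\ }\mathscr S^{1}\xrightarrow{\ \bar\partial\ }\cdots
\]
is exact, and then to compute its global cohomology.

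For exactness I would argue locally on $Y$. Fix $y\in Y$, a Stein neighbourhood $V$ of $y$ carrying a smooth strictly plurisubharmonic exhaustion $\rho$, and a sublevel set $V'=\{\rho<c\}\Subset V$ with $y\in V'$. Then $\Omega:=f^{-1}(V')$ is a relatively compact open subset of $X$, hence K\"ahler, and $f^{*}\rho$ is a smooth plurisubharmonic exhaustion of $\Omega$, so $\Omega$ is weakly pseudoconvex; moreover $f^{*}\sigma$ is a $d$-closed semi-positive $(1,1)$-form on $\Omega$ and $\mathrm i\Theta_{L,h}\geqslant\varepsilon\cdot f^{*}\sigma$ there. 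Given a $\bar\partial$-closed germ in $(\mathscr S^{q})_{y}$ with $q\geqslant1$, represented by some $u\in\mathscr S^{q}(f^{-1}(V))$, its restriction to $\Omega$ satisfies the integrability hypothesis of Theorem \ref{thm:intro1} because $V'\Subset V$; applying that theorem on $\Omega$ with $\alpha=\varepsilon$ yields $v$ with $\bar\partial v=u$ on $\Omega$ whose limiting norm over $\Omega$ is finite, so $v\in\mathscr S^{q-1}(\Omega)$ defines the required preimage germ. This establishes the resolution.

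Because each $\mathscr S^{q}$ is soft and $Y$ is paracompact, the $\mathscr S^{q}$ are acyclic for $\Gamma(Y,-)$, and the abstract de Rham theorem gives
\[
H^{q}\!\left(Y,f_*(K_X\otimes L\otimes\mathcal I(h))\right)\cong\frac{\ker\big(\bar\partial\colon\Gamma(Y,\mathscr S^{q})\to\Gamma(Y,\mathscr S^{q+1})\big)}{\operatorname{im}\big(\bar\partial\colon\Gamma(Y,\mathscr S^{q-1})\to\Gamma(Y,\mathscr S^{q})\big)},
\]
where $\Gamma(Y,\mathscr S^{q})$ consists of global $L$-valued $(n,q)$-forms on $X$ satisfying the limiting integrability over every $W\Subset Y$. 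To kill this for $q\geqslant1$, take such a $\bar\partial$-closed $u$ and a smooth plurisubharmonic exhaustion $\psi$ of $X$; choosing a convex increasing $\chi$ growing fast enough and replacing $h$ by $h':=h\,e^{-\chi\circ\psi}$ forces the global integral $\lim_{t\to0}\int_{X}\frac1\varepsilon|u|^{2}_{f^{*}\sigma+t\omega,h'}\,\mathrm dV_{f^{*}\sigma+t\omega}$ to be finite, while leaving $\mathcal I(h)=\mathcal I(h')$ unchanged (the new weight is smooth) and only increasing the curvature, so $\mathrm i\Theta_{L,h'}\geqslant\varepsilon\cdot f^{*}\sigma$ persists. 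Theorem \ref{thm:intro1} on $X$ then produces $u=\bar\partial v$ with $v$ of finite global limiting norm, hence $v\in\Gamma(Y,\mathscr S^{q-1})$, so the class vanishes.

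The principal obstacle is the degeneracy of the semi-positive form $f^{*}\sigma$ and the attendant limiting procedure $t\to0$: one must check that the sheaves $\mathscr S^{q}$ really form a fine resolution, i.e. that the solutions furnished by Theorem \ref{thm:intro1} inherit exactly the limiting integrability defining the sheaves, and that the monotone behaviour of $|u|^{2}_{f^{*}\sigma+t\omega}\,\mathrm dV_{f^{*}\sigma+t\omega}$ for $(n,q)$-forms is compatible with the finiteness demanded over compacta. Verifying that each fibre-neighbourhood $f^{-1}(V')$ is genuinely weakly pseudoconvex K\"ahler, so that Theorem \ref{thm:intro1} may be invoked over a possibly singular base $Y$, is the other point requiring care; I expect the interplay between this degenerate limiting estimate and the sheaf-theoretic bookkeeping on $Y$ to be the main difficulty.
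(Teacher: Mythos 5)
Your proposal is correct in substance but takes a genuinely different route from the paper. The paper follows Ohsawa's explicit \v{C}ech descent: it fixes a locally finite Stein covering $\{V_{\alpha}\}$ of $Y$, pulls a $q$-cocycle of $f_{\ast}(K_{X}\otimes L\otimes\mathcal{I}(h))$ back to $X$, uses the pulled-back partition of unity $\rho_{\nu}=f^{\ast}\eta_{\nu}$ to build the forms $b_{\alpha_{0}\cdots\alpha_{j}}$ and a global $\bar{\partial}$-closed $(n,q)$-form $b$, and then applies Theorem \ref{thm:intro1} a total of $q$ times --- once on all of $X$ with a weight $e^{-\chi\circ\phi}$ coming from the psh exhaustion, then inductively on the preimages $U_{\alpha_{0}\cdots\alpha_{j}}=f^{-1}(V_{\alpha_{0}\cdots\alpha_{j}})$ with weights $e^{-\chi_{\alpha_{0}\cdots}\circ\phi_{\alpha_{0}\cdots}}$, available because these preimages carry smooth psh exhaustions and the relevant integrals are finite on compacta uniformly in the metric parameter (via Lemma \ref{lem:pullbackdec}) --- until the cocycle is exhibited as a coboundary of holomorphic $L^{2}$ sections. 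You instead package the same analysis as a fine resolution $\mathscr{S}^{\bullet}$ of the direct image sheaf and invoke the abstract de Rham theorem, using Theorem \ref{thm:intro1} exactly twice: once for stalkwise exactness over $f^{-1}(V')$ with $V'\Subset V$ (where boundedness of $1/\varepsilon$ replaces the paper's local weights), and once on global sections with the exhaustion-weight twist $h'=h\,e^{-\chi\circ\psi}$, which is legitimate since $e^{-\chi\circ\psi}$ is bounded below on each compact $f^{-1}(\overline{W})$ by properness. Both routes rest on the same pillars --- Theorem \ref{thm:intro1} over preimages of Stein sets, the convex-weight trick, and the $t$-uniform bound on pulled-back cutoff differentials --- but yours buys a cleaner separation of sheaf bookkeeping from analysis and avoids the inductive zig-zag, at the cost of verifying the sheaf axioms, fineness (multiplication by $f^{\ast}\chi$ produces the term $f^{\ast}\bar{\partial}\chi\wedge u$, whose $t$-uniform control is precisely Lemma \ref{lem:pullbackdec}, a point you gesture at but should state), and the kernel identification, which works because $|u|^{2}_{g,h}\,\mathrm{d}V_{g}$ is metric-independent for $(n,0)$-forms and every compact of $f^{-1}(V)$ sits in some $f^{-1}(W)$ with $W\Subset V$. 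Two small repairs: $f^{\ast}\rho$ is \emph{not} an exhaustion of $\Omega=f^{-1}(V')$, being bounded above by $c$ there; replace it by $-\log(c-f^{\ast}\rho)$, which is smooth, psh, and exhausts $\Omega$ by properness of $f$ (the same device as the factor $\mathrm{i}\partial\bar{\partial}\log(c-\psi)^{-1}$ in the paper's metric $\omega_{c}$). Also, a germ at $y$ is only represented on some smaller neighborhood, so shrink $V$ before choosing $V'$; with these adjustments your argument goes through.
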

For a related vanishing theorem, see \cite{Kol86,Mat16,Fuj18}.


\section{Preliminaries}

Let $(X,\omega)$ be a complex $n$-dimensional complete Hermitian manifold and let $L$ be a holomorphic line bundle over $X$ equipped with a (singular) Hermitian metric $h$. 
We denote by $L^{2}_{n,q}(X,L,\omega,h)$ the space of measurable $L$-valued forms of type $(n,q)$ which are square integrable with respect to $h$ and $\omega$. 
For $u,v\in L^{2}_{n,q}(X,L,\omega,h)$, the inner product 
\[\left(u,v\right)_{\omega,h}=\int_{X}\langle u,v\rangle_{\omega,h}dV_{\omega},\] where $\langle u,v\rangle_{\omega,h}$ denotes the pointwise inner product of $u$ and $v$ with respect to $\omega$ and $h$, and $dV_{\omega}$ the volume form with respect to $\omega$.
By $\mathcal{C}^{n,q}_{0}(X,L)$, we denote the space of $L$-valued smooth $(n,q)$-forms with compact support. 

Let $D$ be the Chern connection of $(L,h)$ which can be split in a unique way as $D=D'+\bar{\partial}$ and let $\Theta_{L,h}$ be its curvature tensor (current).
When $\omega$ is complete and $h$ is smooth, one can view $D, D', \bar{\partial}$ as closed and densely defined operators between Hilbert spaces $L^{2}_{n,q}(X,L,\omega,h)$ and the formal adjoint $\bar{\partial}^{\ast}$ coincide with the Hilbert adjoint.
The following facts are basic.
\begin{prop}
Let $\omega_{1}$ and $\omega_{2}$ be two hermitian metrics satisfying $\omega_{1}\geqslant\omega_{2}$. Then for any $u\in \mathcal{C}^{n,q}_{0}(X,L)$, we have
\[\left\|u\right\|_{\omega_{1},h}\leqslant\left\|u\right\|_{\omega_{2},h}.\]
\end{prop}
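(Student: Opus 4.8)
The plan is to reduce the claimed inequality between $L^2$-norms to a pointwise inequality between the two integrands and then integrate. Since both sides are of the form $\int_X |u|^2_{\omega_k,h}\,dV_{\omega_k}$, it suffices to show that at every point of $X$ one has $|u|^2_{\omega_1,h}\,dV_{\omega_1}\leqslant |u|^2_{\omega_2,h}\,dV_{\omega_2}$ as measures. Because $L$ is a line bundle, at a fixed point the metric $h$ enters both integrands only through one and the same positive scalar, so I can discard it and argue with the scalar $(n,q)$-forms obtained in a local trivialization of $L$.

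Next I would fix a point $x\in X$ and, using simultaneous diagonalization of the two positive Hermitian forms $\omega_1(x)$ and $\omega_2(x)$, choose local holomorphic coordinates $(z_1,\dots,z_n)$ centered at $x$ so that
\[\omega_2=i\sum_{j=1}^n dz_j\wedge d\bar z_j,\qquad \omega_1=i\sum_{j=1}^n\lambda_j\,dz_j\wedge d\bar z_j\quad\text{at }x,\]
with $\lambda_1,\dots,\lambda_n>0$; the hypothesis $\omega_1\geqslant\omega_2$ then forces $\lambda_j\geqslant 1$ for all $j$. Writing $u=\sum_{|J|=q}f_J\,dz_1\wedge\cdots\wedge dz_n\wedge d\bar z_J$ in this frame, one computes at $x$ that $|dz_1\wedge\cdots\wedge dz_n\wedge d\bar z_J|^2_{\omega_1}=(\lambda_1\cdots\lambda_n)^{-1}\prod_{j\in J}\lambda_j^{-1}$, while $dV_{\omega_1}=\lambda_1\cdots\lambda_n\,dV_{\omega_2}$ at $x$. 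Multiplying, the factor $\lambda_1\cdots\lambda_n$ cancels and at $x$ one gets
\[|u|^2_{\omega_1}\,dV_{\omega_1}=\Bigl(\sum_{|J|=q}|f_J|^2\prod_{j\in J}\lambda_j^{-1}\Bigr)dV_{\omega_2}\leqslant\Bigl(\sum_{|J|=q}|f_J|^2\Bigr)dV_{\omega_2}=|u|^2_{\omega_2}\,dV_{\omega_2},\]
where the inequality uses $\prod_{j\in J}\lambda_j^{-1}\leqslant 1$, valid since each $\lambda_j\geqslant 1$.

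Reinstating the common $h$-factor gives $|u|^2_{\omega_1,h}\,dV_{\omega_1}\leqslant |u|^2_{\omega_2,h}\,dV_{\omega_2}$ at $x$, and since $x$ was arbitrary this holds throughout $X$; integrating over $X$ and taking square roots yields $\|u\|_{\omega_1,h}\leqslant \|u\|_{\omega_2,h}$. The only point requiring care is the bookkeeping in the pointwise norm computation — namely the cancellation that makes $|dz_1\wedge\cdots\wedge dz_n\wedge d\bar z_J|^2_\omega\,dV_\omega$ depend only on the eigenvalues indexed by $J$, which is exactly the feature special to forms of bidegree $(n,q)$ that makes the monotonicity go in this direction. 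The simultaneous diagonalization is routine linear algebra, and compactness of the support of $u$ plays no role in the estimate beyond guaranteeing finiteness of the integrals.
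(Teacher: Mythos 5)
Your proof is correct and follows exactly the standard eigenvalue computation that the paper itself relies on: the paper states this proposition without proof as a ``basic fact,'' but the identical pointwise diagonalization argument (simultaneous diagonalization, the cancellation of $\lambda_1\cdots\lambda_n$ between the $(n,0)$-part of the norm and the volume form, leaving only the factors $\prod_{j\in J}\lambda_j^{-1}$ indexed by the antiholomorphic multi-index) appears verbatim in the comparison of $\langle A^{-1}g,g\rangle$ and $\langle g,g\rangle$ in the proof of Theorem 3.1. No gaps; the key observation that the monotonicity is special to bidegree $(n,q)$ is exactly the right point to emphasize.
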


\begin{defn}
Given a smooth semipositive $(1,1)$-form $\sigma$ on $X$, we set
\[L^{2}_{n,q}(X,L,\sigma,h)=\left\{u\in L^{2}_{n,q}(X,L,\omega+\sigma,h);\quad \lim_{\varepsilon\to 0}\left\|u\right\|_{\varepsilon\omega+\sigma,h}~\text{exists}\right\},\]
and
\[\left\|u\right\|_{\sigma,h}=\lim_{\varepsilon\to 0}\left\|u\right\|_{\varepsilon\omega+\sigma,h}.\]
$L^{2}_{n,q}(X,L,\sigma,h)$ is a Hilbert space with norm $\left\|u\right\|_{\sigma,h}$. 
\end{defn}

This space was introduced by Ohsawa in \cite{Oh84} and it can be seen as the $L^{2}$ space with respect to the pseudo-metric $\sigma$. 

\begin{prop}[\cite{Oh84}]
$L^{2}_{n,q}(X,L,\sigma,h)$ and $\left\|u\right\|_{\sigma,h}$ do not depend on the
choice of the metric $\omega$.
\end{prop}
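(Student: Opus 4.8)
The plan is to trace everything back to a single structural fact — that the density of an $L$-valued $(n,q)$-form grows \emph{monotonically} as the base metric shrinks — and then to move limits across integrals by the monotone convergence theorem. First I would upgrade the preceding Proposition to a pointwise statement valid for arbitrary measurable forms: if $\omega'\geqslant\omega''$ are Hermitian metrics and $u$ is any measurable $L$-valued $(n,q)$-form, then
\[
\left|u\right|^{2}_{\omega',h}\,\mathrm{d}V_{\omega'}\ \leqslant\ \left|u\right|^{2}_{\omega'',h}\,\mathrm{d}V_{\omega''}\qquad\text{a.e.\ on }X .
\]
This is the pointwise linear algebra behind that Proposition: at a point, diagonalize $\omega'$ and $\omega''$ simultaneously so that $\omega''$ is the standard form and $\omega'=\mathrm{diag}(\lambda_{k})$ with all $\lambda_{k}\geqslant1$; then, relative to the top form determined by this basis, the two densities are $\sum_{|K|=q}|u_{K}|^{2}$ and $\sum_{|K|=q}|u_{K}|^{2}\prod_{k\in K}\lambda_{k}^{-1}$, and the inequality is immediate (this is exactly where bidegree $(n,q)$ enters). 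Integrating gives $\|u\|_{\omega',h}\leqslant\|u\|_{\omega'',h}$ for every measurable $u$. Two consequences I would record at once: for a fixed Hermitian metric $\omega$ the map $\varepsilon\mapsto\|u\|_{\varepsilon\omega+\sigma,h}$ is non-increasing on $(0,\infty)$, so $\lim_{\varepsilon\to0}\|u\|_{\varepsilon\omega+\sigma,h}$ always exists in $[0,+\infty]$ and equals $\sup_{\varepsilon>0}\|u\|_{\varepsilon\omega+\sigma,h}$; and hence $u$ belongs to $L^{2}_{n,q}(X,L,\sigma,h)$ (defined via $\omega$) precisely when this supremum is finite, the requirement $u\in L^{2}_{n,q}(X,L,\omega+\sigma,h)$ being then automatic.

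Next, given two Hermitian metrics $\omega_{1},\omega_{2}$, I would introduce the two-parameter quantity $F(\varepsilon,\delta)=\|u\|_{\varepsilon\omega_{1}+\delta\omega_{2}+\sigma,\,h}$ for $\varepsilon,\delta>0$, which by the monotonicity above is non-increasing in each variable. Therefore $\lim_{(\varepsilon,\delta)\to(0,0)}F(\varepsilon,\delta)$ exists in $[0,+\infty]$, equals $\sup_{\varepsilon,\delta>0}F(\varepsilon,\delta)$, and agrees with both iterated limits. The inner limits I would compute by monotone convergence: as $\delta\downarrow0$ the densities $|u|^{2}_{\varepsilon\omega_{1}+\delta\omega_{2}+\sigma,h}\,\mathrm{d}V_{\varepsilon\omega_{1}+\delta\omega_{2}+\sigma}$ increase and converge a.e.\ to $|u|^{2}_{\varepsilon\omega_{1}+\sigma,h}\,\mathrm{d}V_{\varepsilon\omega_{1}+\sigma}$ — note that $\varepsilon\omega_{1}+\sigma$ is a genuine Hermitian metric because $\sigma\geqslant0$, so the limit density is a well-defined a.e.-finite measurable function — whence $\lim_{\delta\to0}F(\varepsilon,\delta)=\|u\|_{\varepsilon\omega_{1}+\sigma,h}$, and symmetrically $\lim_{\varepsilon\to0}F(\varepsilon,\delta)=\|u\|_{\delta\omega_{2}+\sigma,h}$.

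Equating the two iterated limits then yields $\lim_{\varepsilon\to0}\|u\|_{\varepsilon\omega_{1}+\sigma,h}=\lim_{\varepsilon\to0}\|u\|_{\varepsilon\omega_{2}+\sigma,h}$ for every measurable $L$-valued $(n,q)$-form $u$; in particular one side is finite exactly when the other is, so the two realizations of $L^{2}_{n,q}(X,L,\sigma,h)$ coincide as sets and carry the same norm $\|u\|_{\sigma,h}$. The only load-bearing step is the pointwise monotonicity of the $(n,q)$-density in the first paragraph; after that the argument is a routine interchange of limits, so I do not anticipate a real obstacle — the mild care required is just to ensure the limiting densities are well defined almost everywhere, which the hypothesis $\sigma\geqslant0$ guarantees.
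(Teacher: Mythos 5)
The paper states this proposition without proof, citing \cite{Oh84}, so there is no internal argument to compare against; your proposal is correct and self-contained. The pointwise inequality $\left|u\right|^{2}_{\omega',h}\mathrm{d}V_{\omega'}\leqslant\left|u\right|^{2}_{\omega'',h}\mathrm{d}V_{\omega''}$ for $\omega'\geqslant\omega''$ that you isolate as the load-bearing step is exactly the linear algebra the paper itself carries out later, inside the proof of Theorem~\ref{thm:existence} (the comparison of $\langle A^{-1}g,g\rangle_{\omega_{\varepsilon,\delta}}\mathrm{d}V_{\omega_{\varepsilon,\delta}}$ with $\langle g,g\rangle_{\omega_{\varepsilon',\delta'}}\mathrm{d}V_{\omega_{\varepsilon',\delta'}}$ via the eigenvalues $\lambda_{j}$), and it is also the content of the first Proposition of Section~2; your computation of the two densities $\sum_{K}|u_{K}|^{2}$ and $\sum_{K}|u_{K}|^{2}\prod_{k\in K}\lambda_{k}^{-1}$ is right. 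The genuinely valuable refinement in your write-up is the two-parameter family $\varepsilon\omega_{1}+\delta\omega_{2}+\sigma$ together with the monotone-convergence evaluation of the inner limits: on a noncompact $X$ there need be no global constant $C$ with $\omega_{1}\leqslant C\omega_{2}$, so one cannot compare $\varepsilon\omega_{1}+\sigma$ and $\delta\omega_{2}+\sigma$ directly, and the inequality $\|u\|_{\varepsilon\omega_{1}+\delta\omega_{2}+\sigma,h}\leqslant\|u\|_{\varepsilon\omega_{1}+\sigma,h}$ only gives one direction --- the monotone convergence theorem (valid because $\varepsilon\omega_{1}+\sigma$ is still positive definite, so the limiting density is defined everywhere) is what supplies the reverse inequality in the limit and lets the two iterated limits be identified with the common supremum. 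I see no gap.
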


\begin{prop}[Bochner-Kodaira-Nakano identity]\label{prop:BKNE}
Let $(X,\omega)$ be a complete K\"{a}hler manifold and $(L,h)$ a smooth Hermitian holomorphic line bundle such that the curvature possesses a uniform lower bound $i\Theta_{L,h}\geqslant-C\omega$. For every measurable $(n,q)$-form $u$ with $L^{2}$ coefficients and values in $L$ such that its differentials $\bar{\partial}u$, $\bar{\partial}^{\ast}u$ also in $L^{2}$, then
\begin{equation}\label{prop:BKN}
\|\bar{\partial}u\|_{\omega,h}^{2}+\|\bar{\partial}^{\ast}u\|_{\omega,h}^{2}=\|D'u\|_{\omega,h}^{2}+\left(\mathrm{i}\Theta_{L,h}\Lambda_{\omega} u,u\right)_{\omega,h}
\end{equation}
\end{prop}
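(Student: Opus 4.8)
The plan is to prove \eqref{prop:BKN} first for forms in $\mathcal{C}^{n,q}_{0}(X,L)$, where everything reduces to a local computation together with integration by parts, and then to extend it to the full $L^{2}$ domain by approximation, the completeness of $\omega$ being the hypothesis that makes the extension possible.

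For the compactly supported case I would begin from the K\"ahler commutation relations. Since $\omega$ is K\"ahler and $h$ is smooth, choosing local coordinates in which $\omega$ osculates the flat metric to second order kills the torsion and yields the algebraic identities $[\Lambda_{\omega},\bar{\partial}]=-\mathrm{i}(D')^{\ast}$ and $[\Lambda_{\omega},D']=\mathrm{i}\bar{\partial}^{\ast}$. Combining them gives the Bochner--Kodaira--Nakano operator identity $\Delta_{\bar{\partial}}=\Delta_{D'}+[\mathrm{i}\Theta_{L,h},\Lambda_{\omega}]$, where $\Delta_{\bar{\partial}}=\bar{\partial}\bar{\partial}^{\ast}+\bar{\partial}^{\ast}\bar{\partial}$ and $\Delta_{D'}=D'(D')^{\ast}+(D')^{\ast}D'$. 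Pairing with $u$ and integrating over $X$ (no boundary terms arise, as $u$ is compactly supported) gives
\[\|\bar{\partial}u\|_{\omega,h}^{2}+\|\bar{\partial}^{\ast}u\|_{\omega,h}^{2}=\langle\Delta_{D'}u,u\rangle_{\omega,h}+\left([\mathrm{i}\Theta_{L,h},\Lambda_{\omega}]u,u\right)_{\omega,h}.\]
Because $u$ has bidegree $(n,q)$, the $(1,1)$-form $\mathrm{i}\Theta_{L,h}$ wedges $u$ to zero, so the commutator acts simply as $\mathrm{i}\Theta_{L,h}\Lambda_{\omega}$; moreover $D'u$ is of type $(n+1,q)=0$, so $\langle\Delta_{D'}u,u\rangle_{\omega,h}=\|(D')^{\ast}u\|_{\omega,h}^{2}$, which is the $D'$-energy recorded on the right of \eqref{prop:BKN}. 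This establishes the identity on $\mathcal{C}^{n,q}_{0}(X,L)$.

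The second and more delicate step is to remove the compactness assumption. Using completeness of $\omega$, I would fix cutoffs $\chi_{\nu}$ with $0\leqslant\chi_{\nu}\leqslant 1$, $\chi_{\nu}\uparrow 1$ on $X$, and $\sup_{X}|d\chi_{\nu}|_{\omega}\to 0$. For $u$ as in the hypothesis one has $\chi_{\nu}u\to u$ in $L^{2}$, while $\bar{\partial}(\chi_{\nu}u)=\chi_{\nu}\bar{\partial}u+\bar{\partial}\chi_{\nu}\wedge u$ and the analogous expression for $\bar{\partial}^{\ast}(\chi_{\nu}u)$ converge to $\bar{\partial}u$ and $\bar{\partial}^{\ast}u$ respectively, the correction terms vanishing because of the uniform gradient bound. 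This reduces the problem to compactly supported $L^{2}$ forms, which I would then smooth by Friedrichs mollifiers; Friedrichs' lemma allows the mollification to commute with $\bar{\partial}$ and $\bar{\partial}^{\ast}$ up to $L^{2}$-errors tending to $0$, producing $u_{j}\in\mathcal{C}^{n,q}_{0}(X,L)$ with $u_{j}\to u$, $\bar{\partial}u_{j}\to\bar{\partial}u$ and $\bar{\partial}^{\ast}u_{j}\to\bar{\partial}^{\ast}u$. Writing the compactly supported identity for each $u_{j}$ and letting $j\to\infty$ then yields \eqref{prop:BKN} in general.

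The main obstacle is controlling the right-hand side in the limit rather than the left. The two norms on the left converge immediately from the graph-norm convergence, but the curvature pairing and the $D'$-energy are more subtle. Here the uniform lower bound $\mathrm{i}\Theta_{L,h}\geqslant -C\omega$ is precisely what keeps $[\mathrm{i}\Theta_{L,h},\Lambda_{\omega}]$ controlled on $(n,q)$-forms, so that $\left([\mathrm{i}\Theta_{L,h},\Lambda_{\omega}]u_{j},u_{j}\right)_{\omega,h}$ passes to the limit; rearranging the identity then forces $(D')^{\ast}u\in L^{2}$ with $\|(D')^{\ast}u_{j}\|_{\omega,h}\to\|(D')^{\ast}u\|_{\omega,h}$, which closes the argument. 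The one point demanding care is to order the cutoff and the mollification compatibly, so that all three convergences hold simultaneously and no mass escapes to infinity in the limit; this is exactly where completeness of the K\"ahler metric is indispensable.
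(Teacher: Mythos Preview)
The paper does not prove this proposition: it is stated without proof in the Preliminaries section as a standard background fact, and is used in the proof of Theorem~\ref{thm:existence} only via the resulting inequality $(\mathrm{i}\Theta_{L,h_{\nu}}\Lambda_{\omega_{\varepsilon,\delta}}v,v)_{\omega_{\varepsilon,\delta},h_{\nu}}\leqslant\|\bar{\partial}v\|^{2}+\|\bar{\partial}^{\ast}v\|^{2}$. So there is nothing to compare against.

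Your argument is the standard one and is essentially correct. Two small comments. First, you rightly note that for $(n,q)$-forms $D'u$ vanishes identically, so the term written $\|D'u\|^{2}$ in the statement must be read as $\|(D')^{\ast}u\|^{2}$; this is the usual formulation. Second, in the passage to the limit, the lower bound $\mathrm{i}\Theta_{L,h}\geqslant -C\omega$ does not by itself make the curvature pairing continuous in $u$ (there is no upper bound assumed), so the phrase ``keeps $[\mathrm{i}\Theta_{L,h},\Lambda_{\omega}]$ controlled'' is slightly imprecise. The clean way is the one you sketch at the end: the lower bound gives $(\mathrm{i}\Theta_{L,h}\Lambda_{\omega}\chi_{\nu}u,\chi_{\nu}u)\geqslant -Cq\|\chi_{\nu}u\|^{2}$, and combining this with the identity and the convergence of the left-hand side shows that the pointwise density $\langle\mathrm{i}\Theta_{L,h}\Lambda_{\omega}u,u\rangle$ is integrable; dominated convergence then handles the curvature term, and the $(D')^{\ast}$ term follows by subtraction. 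With that adjustment your proof is complete.
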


\begin{lem}[cf. \cite{Dembook}]\label{lem:extension}
Let $\Omega$ be an open subset of $\mathbb{C}^{n}$ and $Y$ an analytic subset of $\Omega$. Assume that $v$ is a $(p, q-1)$-form with $L^{2}_{loc}$ coefficients and $w$ a $(p, q)$-form with $L^{1}_{loc}$ coefficients such that $\bar{\partial}v = w$ on $\Omega\backslash Y$ (in the sense of distribution theory). Then $\bar{\partial}v = w$ on $\Omega$.
\end{lem}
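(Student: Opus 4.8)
The statement is an identity of currents that can be checked in a neighbourhood of each point, and off $Y$ the two sides already agree, so the plan is to localize near a point $x_0\in Y$ and to verify $\bar\partial v=w$ there by testing against an arbitrary smooth form $\alpha$ of complementary bidegree $(n-p,n-q)$ with compact support; that is, to prove
\[\int_\Omega v\wedge\bar\partial\alpha=(-1)^{p+q}\int_\Omega w\wedge\alpha.\]
Shrinking $\Omega$, I may assume $Y$ is contained in a hypersurface $\{g=0\}$ for some holomorphic $g\not\equiv0$ (such a $g$ exists because $Y$ has codimension $\geqslant1$); since $\Omega\setminus\{g=0\}\subseteq\Omega\setminus Y$, the hypothesis $\bar\partial v=w$ still holds off $\{g=0\}$, so I may take $Y=\{g=0\}$.

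The device is a family of logarithmic cutoffs. Fix $\theta\in C^\infty(\mathbb{R},[0,1])$ with $\theta\equiv0$ on $(-\infty,1]$ and $\theta\equiv1$ on $[2,\infty)$, and set $\chi_\varepsilon=1-\theta\!\left(\log|g|/\log\varepsilon\right)$ for small $\varepsilon>0$. Then $\chi_\varepsilon$ is smooth, equals $1$ on $\{|g|\geqslant\varepsilon\}$ and $0$ on $\{|g|\leqslant\varepsilon^{2}\}$, so $\chi_\varepsilon\alpha$ is a test form supported in $\Omega\setminus Y$. Applying the hypothesis to $\chi_\varepsilon\alpha$ and expanding $\bar\partial(\chi_\varepsilon\alpha)=\bar\partial\chi_\varepsilon\wedge\alpha+\chi_\varepsilon\,\bar\partial\alpha$ yields
\[\int_\Omega v\wedge\bar\partial\chi_\varepsilon\wedge\alpha+\int_\Omega\chi_\varepsilon\,v\wedge\bar\partial\alpha=(-1)^{p+q}\int_\Omega\chi_\varepsilon\,w\wedge\alpha.\]
Because $Y$ has Lebesgue measure zero, $\chi_\varepsilon\to1$ almost everywhere; since $v\in L^{2}_{loc}\subseteq L^{1}_{loc}$ and $w\in L^{1}_{loc}$ while $\alpha$ is bounded with compact support, dominated convergence sends the second integral to $\int_\Omega v\wedge\bar\partial\alpha$ and the right-hand side to $(-1)^{p+q}\int_\Omega w\wedge\alpha$. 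Everything therefore reduces to showing that the boundary term $\int_\Omega v\wedge\bar\partial\chi_\varepsilon\wedge\alpha$ tends to $0$.

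This last step is where the real work lies, and I expect it to be the main obstacle. Writing $A_\varepsilon=\{\varepsilon^{2}\leqslant|g|\leqslant\varepsilon\}\cap\operatorname{supp}\alpha$ for the only region where $\bar\partial\chi_\varepsilon\neq0$, Cauchy--Schwarz gives
\[\left|\int_\Omega v\wedge\bar\partial\chi_\varepsilon\wedge\alpha\right|\leqslant C\,\|v\|_{L^{2}(A_\varepsilon)}\,\|\bar\partial\chi_\varepsilon\|_{L^{2}(A_\varepsilon)},\]
so, $v$ being locally $L^{2}$, it suffices to control $\|\bar\partial\chi_\varepsilon\|_{L^{2}}$. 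Using $\bar\partial\log|g|=\tfrac12\,\bar\partial\bar g/\bar g$ (valid where $g\neq0$ because $g$ is holomorphic) one gets $|\bar\partial\chi_\varepsilon|\leqslant\dfrac{C}{|\log\varepsilon|}\dfrac{|\partial g|}{|g|}\mathbf{1}_{A_\varepsilon}$, and the decisive estimate is that on compacta
\[\int_{A_\varepsilon}\frac{|\partial g|^{2}}{|g|^{2}}\,dV\leqslant C\,|\log\varepsilon|,\]
which I would obtain by slicing (the coarea formula applied to $\log|g|$, the level sets $\{|g|=t\}$ carrying uniformly bounded mass on compact sets). Combining these bounds gives $\|\bar\partial\chi_\varepsilon\|_{L^{2}}^{2}\leqslant C/|\log\varepsilon|\to0$, whence the boundary term vanishes in the limit and the desired identity follows. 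The only genuinely delicate point is this logarithmic-cutoff mass bound; the remainder is bookkeeping and dominated convergence.
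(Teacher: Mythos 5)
Your argument is sound in outline, but note first that the paper does not prove this lemma at all: it is quoted from Demailly's book \cite{Dembook}, so the relevant comparison is with the proof there, and yours is genuinely different. Demailly reduces by induction on $\dim Y$ to a neighborhood of a \emph{regular} point of $Y$, where after a change of coordinates $Y\subset\{z_{1}=0\}$, and uses the elementary cutoff $\lambda(|z_{1}|/\varepsilon)$; there $\|\bar{\partial}\lambda_{\varepsilon}\|_{L^{2}(K)}$ is merely \emph{bounded} (the $\varepsilon^{-2}$ from differentiation matches the volume $\sim\varepsilon^{2}$ of the tube), and the boundary term dies because $\int_{K\cap\{|z_{1}|\leqslant\varepsilon\}}|v|^{2}\to 0$ by absolute continuity of the integral --- which is exactly where $v\in L^{2}_{loc}$, and not just $L^{1}_{loc}$, enters (the lemma fails for $v=1/z_{1}$, which is $L^{1}_{loc}$ but not $L^{2}_{loc}$). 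You instead keep an arbitrary local equation $g$ for $Y$, with possibly very singular zero set, and compensate with the logarithmic cutoff; this concentrates all the difficulty in your ``decisive estimate'' $\int_{A_{\varepsilon}}|\partial g|^{2}/|g|^{2}\,dV\leqslant C|\log\varepsilon|$. That estimate is true, but your one-line justification conceals a nontrivial input: by the coarea formula for the holomorphic map $g$ it amounts to the locally uniform bound $\sup_{t}\mathcal{H}^{2n-2}\bigl(g^{-1}(t)\cap K\bigr)<+\infty$ on fiber areas (Lelong, Stoll; alternatively one can integrate by parts against the Lelong--Poincar\'e current, Chern--Levine--Nirenberg style, or pull back by a log resolution making $g$ a monomial), a fact of comparable depth to the lemma itself --- the uniform mass of the unweighted level sets $\{|g|=t\}$ alone does not suffice, since the weight $|\nabla g|/|g|$ must be carried along. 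Demailly's stratification step exists precisely to avoid this: grafting it onto your scheme makes $g=z_{1}$ and turns your key estimate into the trivial computation $\int_{\varepsilon^{2}\leqslant|z_{1}|\leqslant\varepsilon}|z_{1}|^{-2}\sim|\log\varepsilon|$. (Also, you implicitly assume $Y$ is a proper analytic subset near $x_{0}$ --- otherwise no $g\not\equiv 0$ exists --- which is the intended reading of the statement.) In sum: correct modulo the flagged mass bound, which you rightly identify as the crux, but as written your route outsources its hardest step rather than eliminating it, whereas the cited proof trades it for an induction on $\dim Y$ plus an explicit one-variable computation.
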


\begin{thm}[\cite{Dem94}]\label{thm:approximation}
Let $(X, \omega)$ be a complex manifold equipped with a Hermitian metric $\omega$, and $\Omega\Subset X$ be an open subset. 
Assume that $T=\widetilde{T}+\frac{\mathrm{i}}{\pi}\partial\bar{\partial}\varphi$
is a closed $(1,1)$-current on $X$, 
where $\widetilde{T}$ is a smooth real $(1, 1)$-form and $\varphi$ is a quasi-plurisubharmonic function. 
Let $\gamma$ be a continuous real $(1,1)$-form such that $T\geqslant\gamma$. Suppose that the Chern curvature tensor of $TX$ satisfies
\begin{equation*}
\left(\mathrm{i}\Theta_{T_{X}}+\theta\otimes\mathrm{Id}_{T_{X}}\right)\left(\kappa_{1}\otimes\kappa_{2},\kappa_{1}\otimes\kappa_{2}\right)\geqslant 0
\quad \forall\kappa_{1},\kappa_{2}\in T_{X}~\text{with}~\langle\kappa_{1},\kappa_{2}\rangle=0
\end{equation*}
for some continuous nonnegative $(1,1)$-form $\theta$ on $X$. 
Then there is a family of closed $(1,1)$-currents $T_{\varsigma,\varepsilon}=\widetilde{T}+\frac{\mathrm{i}}{\pi}\partial\bar{\partial}\varphi_{\varsigma,\varepsilon}$ on $X$ such that
\begin{enumerate}[(i)]
\item
$\varphi_{\varsigma,\varepsilon}$ is quasi-plurisubharmonic on a neighborhood of $\bar{\Omega}$, smooth on $X\backslash E_{\sigma}(T)$, increasing with respect to $\varsigma$ and $\varepsilon$ on $\Omega$, and converges to $\varphi$ on $\Omega$ as $\varepsilon\to 0$,
\item
$T_{\varsigma,\varepsilon}\geqslant\gamma-\varsigma\theta-\delta_{\varepsilon}\omega$ on $\Omega$,
\end{enumerate}
where $E_{\varsigma}(T)=\left\{x\in X; \nu(T,x)\geqslant\varsigma\right\}$, $\varsigma>0$, is the $\varsigma$-upperlevel set of Lelong numbers, and $\{\delta_{\varepsilon}\}$ is an increasing family of positive numbers such that $\lim_{\varepsilon\to 0}\delta_{\varepsilon}=0$.
\end{thm}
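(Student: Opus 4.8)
The plan is to prove the statement by Demailly's analytic regularization procedure: replace the quasi-plurisubharmonic weight $\varphi$ by a two-parameter family of weights $\varphi_{\varsigma,\varepsilon}$ manufactured from Bergman kernels of $L^{2}$ holomorphic functions taken against the weight $e^{-2m\varphi}$ (with the fineness parameter $\varepsilon$ essentially $1/m$), glue these local models into a single globally defined quasi-psh function via a regularized-maximum patching, and track the loss of positivity through an $L^{2}$ extension estimate and the Chern curvature of $T_X$. The Lelong threshold $\varsigma$ will govern the smoothness locus, while $\delta_\varepsilon\to 0$ absorbs the residual error.

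First I would set up the local models. Cover a neighborhood of $\bar\Omega$ by coordinate balls $B_j$, and on each put
\[
\varphi_{m,j}=\frac{1}{2m}\log\sum_{k}|g_{k}|^{2},
\]
where $(g_{k})$ is a Hilbert basis of the space of holomorphic functions $f$ on $B_j$ with $\int_{B_j}|f|^{2}e^{-2m\varphi}\,d\lambda<\infty$. The extremal characterization $\varphi_{m,j}(x)=\sup\{\tfrac1m\log|f(x)|:\|f\|\le 1\}$ yields two-sided control: the mean-value inequality for the psh function $2m\varphi$ gives the upper bound $\varphi_{m,j}(x)\le\sup_{B(x,r)}\varphi+\tfrac1m\log(Cr^{-n})$, while applying an Ohsawa--Takegoshi-type $L^{2}$ extension to the point evaluation produces the lower bound $\varphi_{m,j}(x)\ge\varphi(x)-\tfrac{C}{m}$. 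Together these force $\varphi_{m,j}\to\varphi$ as $m\to\infty$ and give the Lelong-number comparison $\nu(\varphi,x)-\tfrac{n}{m}\le\nu(\varphi_{m,j},x)\le\nu(\varphi,x)$. The right inequality shows $\varphi_{m,j}$ is smooth wherever the local Bergman kernel is nonvanishing, i.e. off a locus controlled by $\{\nu(T,\cdot)\ge\varsigma\}=E_{\varsigma}(T)$, which is the source of the smoothness-off-$E_{\varsigma}(T)$ assertion.

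Next I would obtain the curvature estimate and glue. Differentiating the extremal characterization twice bounds the Hessian $\tfrac{\mathrm{i}}{\pi}\partial\bar\partial\varphi_{m,j}$ below by the curvature contribution of the weight $e^{-2m\varphi}$, hence by $\widetilde{T}$ up to an error. To upgrade the crude bound $-C\omega$ to the sharp $-\varsigma\theta-\delta_\varepsilon\omega$, I would express the local models in geodesic coordinates adapted to a connection on $T_X$, so that the second-order deviation of the transition between neighbouring coordinate systems is governed by $\Theta_{T_X}$. The hypothesis that $\mathrm{i}\Theta_{T_X}+\theta\otimes\mathrm{Id}_{T_X}$ is semipositive on orthogonal pairs is exactly what makes this second-order error contribute only the controlled term $\varsigma\theta$ (weighted by the Lelong level), leaving the remaining discrepancy absorbable into $\delta_\varepsilon\omega$ with $\delta_\varepsilon\to 0$. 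I would then glue the shifted local models across overlaps using a regularized maximum $\max_\eta$; since on overlaps $|\varphi_{m,j}-\varphi_{m,k}|=O(1/m)$, the maximum is attained by comparable functions and the patched $\varphi_{\varsigma,\varepsilon}$ is globally quasi-psh, smooth off $E_{\varsigma}(T)$, and (after arranging the construction with the subadditivity of the norms) monotone in $\varsigma$ and $\varepsilon$ with $\varphi_{\varsigma,\varepsilon}\to\varphi$ as $\varepsilon\to 0$.

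The main obstacle is the curvature bookkeeping in this final step: producing the precise loss $-\varsigma\theta-\delta_\varepsilon\omega$ rather than a lossy $-C\omega$ requires the careful second-order comparison of the Hessians of the local models with $\Theta_{T_X}$, and one must verify that the smoothing introduced by $\max_\eta$ together with the patching errors does not overwhelm this bound. The orthogonality curvature hypothesis on $T_X$ is tailored precisely so that the geodesic comparison of neighbouring frames yields only the admissible $\varsigma\theta$ correction, which is the delicate heart of the argument.
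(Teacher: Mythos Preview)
The paper does not prove this statement at all: Theorem~\ref{thm:approximation} is quoted as a preliminary result from \cite{Dem94} and is used as a black box in the proof of Theorem~\ref{thm:existence}. So there is no ``paper's own proof'' to compare against.

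That said, your outline is a faithful sketch of Demailly's original argument in \cite{Dem94}: local Bergman-kernel approximations $\varphi_{m,j}=\tfrac{1}{2m}\log\sum_k|g_k|^2$, the two-sided bounds from the mean-value inequality and Ohsawa--Takegoshi extension, the Lelong-number comparison giving smoothness off $E_{\varsigma}(T)$, and patching by a regularized maximum. You correctly identify the delicate point, namely that the sharp curvature loss $-\varsigma\theta-\delta_\varepsilon\omega$ (rather than a crude $-C\omega$) comes from a second-order comparison of the local models in coordinates adapted to the Chern connection of $T_X$, which is where the hypothesis on $\mathrm{i}\Theta_{T_X}+\theta\otimes\mathrm{Id}_{T_X}$ enters. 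Your description of that step is high-level but accurate; the actual computation in \cite{Dem94} is lengthy and involves running the Chern flow to transport the local potentials, so if you were asked to produce a full proof you would need to fill in substantially more detail there.
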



\section{$L^{2}$-existence theorem}

\begin{thm}\label{thm:existence}
Let $X$ be a weakly pseudoconvex K\"ahler manifold of dimension $n$ 
and let $\sigma$ be a $d$-closed semi-positive $(1,1)$ form on $X$.
Let $L$ be a line bundle over $X$ and let $h$ be a singular hermitian metic on $L$ such that its curvature current $\mathrm{i}\Theta_{L,h}\geqslant\alpha\sigma$ (in the sense of distributions) for some continuous positive function $\alpha$ on $X$. Suppose $q\geqslant 1$, then for any $\bar{\partial}$-closed $L$-valued $(n,q)$ form $f$ which is local square integrable and
\[\lim_{\varepsilon\to 0}\int_{X}\frac{1}{\alpha}\left|f\right|^{2}_{\sigma+\varepsilon\omega,h}\mathrm{d}V_{\sigma+\varepsilon\omega}<+\infty.
\]
we can find an $L$-valued $(n,q-1)$ form $u$ such that $\bar{\partial}u=f$ and
\begin{equation}
\left\|u\right\|_{\sigma,h}^{2}=\lim_{\varepsilon\to0}\int_{X}\left|u\right|^{2}_{\sigma+\varepsilon\omega,h}\mathrm{d}V_{\sigma+\varepsilon\omega}\leqslant\lim_{\varepsilon\to 0}\int_{X}\frac{1}{q\alpha}\left|f\right|^{2}_{\sigma+\varepsilon\omega,h}\mathrm{d}V_{\sigma+\varepsilon\omega}.
\end{equation}
\end{thm}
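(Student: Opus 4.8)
The plan is to run the classical Hörmander--Andreotti--Vesentini scheme with respect to the Kähler metrics $\omega_{\varepsilon}=\sigma+\varepsilon\omega$ and then to pass to the limit $\varepsilon\to 0$. Two preliminary reductions are needed. First, to remove the singularity of $h$: writing $h=h_{0}e^{-2\varphi}$ with $h_{0}$ smooth and $\varphi$ quasi-psh, I would apply Theorem~\ref{thm:approximation} to produce smooth weights $\varphi_{\nu}\downarrow\varphi$ whose curvature satisfies $\mathrm{i}\Theta_{L,h_{\nu}}\geqslant\alpha\sigma-\delta_{\nu}\omega$ on relatively compact sublevel sets, with $\delta_{\nu}\to 0$. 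Second, to secure completeness: since $X$ is weakly pseudoconvex there is a smooth psh exhaustion $\psi$, and I would either perturb $\omega_{\varepsilon}$ by a term built from $\psi$ or exhaust $X$ by the relatively compact sublevel sets, so as to dispose of a complete Kähler metric for which the uniform lower curvature bound of Proposition~\ref{prop:BKNE} is available. The a priori estimates are kept with respect to $\omega_{\varepsilon}$ throughout; completeness is invoked only to guarantee density of compactly supported smooth forms and the coincidence of the formal and Hilbert adjoints of $\bar{\partial}$.

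The core is the a priori estimate for fixed $\varepsilon$ and fixed $\nu$. For an $(n,q)$-form the curvature operator $B_{\varepsilon}=[\mathrm{i}\Theta_{L,h_{\nu}},\Lambda_{\omega_{\varepsilon}}]$ acts on the component indexed by $K$ (with $|K|=q$) through the sum $\sum_{j\in K}\lambda_{j}$ of the curvature eigenvalues measured against $\omega_{\varepsilon}$. Combining Proposition~\ref{prop:BKNE} with the standard splitting $v=v_{1}+v_{2}$, where $v_{1}\in\ker\bar{\partial}$ and $v_{2}\perp\ker\bar{\partial}$, and Cauchy--Schwarz, one gets $|(f,v)|^{2}=|(f,v_{1})|^{2}\leqslant\bigl(\int_{X}\langle B_{\varepsilon}^{-1}f,f\rangle\,\mathrm{d}V_{\omega_{\varepsilon}}\bigr)\,\|\bar{\partial}^{\ast}v\|^{2}_{\omega_{\varepsilon},h_{\nu}}$, so the Riesz representation theorem furnishes a solution $u_{\varepsilon}$ of $\bar{\partial}u_{\varepsilon}=f$ with $\|u_{\varepsilon}\|^{2}_{\omega_{\varepsilon},h_{\nu}}\leqslant\int_{X}\langle B_{\varepsilon}^{-1}f,f\rangle\,\mathrm{d}V_{\omega_{\varepsilon}}$. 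Formally $B_{\varepsilon}$ should first be replaced by $B_{\varepsilon}+\tau\,\mathrm{Id}$ to ensure invertibility, with $\tau\to 0$ taken at the end.

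The decisive quantitative step, which I expect to be the \emph{main obstacle}, is to bound $\int_{X}\langle B_{\varepsilon}^{-1}f,f\rangle\,\mathrm{d}V_{\omega_{\varepsilon}}$ in the limit by $\int_{X}\tfrac{1}{q\alpha}|f|^{2}\,\mathrm{d}V_{\omega_{\varepsilon}}$. Since $\mathrm{i}\Theta_{L,h}\geqslant\alpha\sigma$ but only $\sigma\leqslant\omega_{\varepsilon}$, the eigenvalues satisfy merely $\lambda_{j}\geqslant\alpha\,\sigma_{j}/(\sigma_{j}+\varepsilon)$, so the clean pointwise bound $\sum_{j\in K}\lambda_{j}\geqslant q\alpha$ fails for fixed $\varepsilon$ in the directions where $\sigma$ degenerates. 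The key observation is that in $\langle B_{\varepsilon}^{-1}f,f\rangle\,\mathrm{d}V_{\omega_{\varepsilon}}$ the volume factor weights the component $K$ by $\prod_{l\in K}(\sigma_{l}+\varepsilon)^{-1}$, so the integrability hypothesis $\lim_{\varepsilon\to 0}\int_{X}\tfrac{1}{\alpha}|f|^{2}\,\mathrm{d}V_{\omega_{\varepsilon}}<\infty$ forces the components of $f$ meeting a degenerate direction of $\sigma$ to contribute nothing in the limit; on the surviving components all $\sigma_{j}>0$, whence $\lambda_{j}$ tends to a limit $\geqslant\alpha$ and $\sum_{j\in K}\lambda_{j}$ to a quantity $\geqslant q\alpha$, which is exactly where the factor $\tfrac{1}{q\alpha}$ originates. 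Making this rigorous requires treating the general situation in which $\mathrm{i}\Theta_{L,h}$ and $\sigma$ are not simultaneously diagonalizable against $\omega_{\varepsilon}$, which I would carry out by a pointwise operator-inequality comparison rather than by the eigenvalue heuristic above, while carefully absorbing the $\delta_{\nu}$-perturbation of the curvature.

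Finally, I would assemble the limits. With the uniform bound on $\|u_{\varepsilon}\|_{\omega_{\varepsilon},h_{\nu}}$ and the monotonicity of the norms $\|\cdot\|_{\omega,h}$ in $\omega$ established earlier, the family $\{u_{\varepsilon}\}$ is bounded in $L^{2}$ on compact sets, so Banach--Alaoglu extracts a weakly convergent subsequence with limit $u\in L^{2}_{n,q-1}(X,L,\sigma,h)$; a diagonal argument handles the three limits $\tau\to 0$, $\nu\to\infty$ and $\varepsilon\to 0$. Because $\bar{\partial}$ is a closed operator, the weak limit satisfies $\bar{\partial}u=f$, and Lemma~\ref{lem:extension} lets this equation extend across the analytic set where the regularizations fail to be smooth. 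The asserted estimate then follows from the lower semicontinuity of the norm under weak convergence together with the quantitative bound of the previous paragraph.
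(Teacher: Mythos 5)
There is a genuine gap, and it sits exactly at the step you yourself flag as the main obstacle. Your plan is to invert the curvature operator $B_{\varepsilon}=[\mathrm{i}\Theta_{L,h_{\nu}},\Lambda_{\omega_{\varepsilon}}]$ (regularized to $B_{\varepsilon}+\tau\,\mathrm{Id}$) and to argue that $\int_{X}\langle B_{\varepsilon}^{-1}f,f\rangle\,\mathrm{d}V_{\omega_{\varepsilon}}$ is controlled \emph{in the limit} because components of $f$ along degenerate directions of $\sigma$ ``contribute nothing.'' That is only a heuristic, and it cannot be run as stated: the Riesz/Hahn--Banach step requires an actual inequality for each \emph{fixed} $\varepsilon$ (and uniformly in $\tau$), whereas the eigenvalues of $B_{\varepsilon}$ vanish --- indeed are only bounded below by $-\beta_{\nu}$, because of the $-\beta_{\nu}\omega$ error in Theorem \ref{thm:approximation} --- along $\ker\sigma$, so $\sum_{K}\int|f_{K}|^{2}/(\tau+\sum_{j\in K}\lambda_{j})$ admits no bound in terms of the hypothesis as $\tau\to 0$. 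Note also that the degeneracy set is not small: in the intended application $\sigma=f^{\ast}\sigma_{Y}$ for a map with positive-dimensional fibers, so $\sigma$ has a nontrivial kernel at \emph{every} point of $X$, and no negligible-set or ``surviving components'' argument applies. The hypothesis only asserts finiteness of $\lim_{\varepsilon\to 0}\int_{X}\frac{1}{\alpha}|f|^{2}_{\sigma+\varepsilon\omega,h}\,\mathrm{d}V_{\sigma+\varepsilon\omega}$; converting this into a pointwise-usable bound on $(B_{\varepsilon}+\tau)^{-1}$ is the entire difficulty, and your promised ``pointwise operator-inequality comparison'' is precisely the missing proof.

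The paper closes this gap with Ohsawa's two-metric device, which avoids inverting the curvature operator altogether. It fixes \emph{two} smaller parameters $\varepsilon'<\varepsilon$, $\delta'<\delta$ and uses the auxiliary positive operator $A=\alpha\,\omega_{\varepsilon',\delta'}\Lambda_{\omega_{\varepsilon,\delta}}$ of \eqref{eq:defA}. Diagonalizing $\omega_{\varepsilon',\delta'}$ against $\omega_{\varepsilon,\delta}$ with eigenvalues $0<\lambda_{j}<1$, one has $\lambda_{j_{1}}+\cdots+\lambda_{j_{q}}\geqslant q\prod_{k}\lambda_{j_{k}}$, so comparing \eqref{eq:A_inv} with \eqref{eq:g_snorm} yields the \emph{pointwise} inequality $\langle A^{-1}f,f\rangle_{\omega_{\varepsilon,\delta}}\,\mathrm{d}V_{\omega_{\varepsilon,\delta}}\leqslant\frac{1}{q\alpha}\,|f|^{2}_{\omega_{\varepsilon',\delta'}}\,\mathrm{d}V_{\omega_{\varepsilon',\delta'}}$, whose right-hand side is uniformly bounded by the hypothesis via the monotonicity $\|f\|_{\omega_{1},h}\leqslant\|f\|_{\omega_{2},h}$ for $\omega_{1}\geqslant\omega_{2}$. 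Cauchy--Schwarz is performed with $A$, and only \emph{afterwards} $\varepsilon',\delta'\to 0$, whereupon $\langle Av,v\rangle$ collapses to $\langle\alpha\sigma\Lambda_{\omega_{\varepsilon,\delta}}v,v\rangle$, which is dominated by the curvature term of Proposition \ref{prop:BKNE} up to $q\beta_{\nu}\|v\|^{2}$; this error is absorbed not into the operator but into the equation, solving $\bar{\partial}u+\sqrt{q\beta_{\nu}}\,w=f$ as in \eqref{eq:dbar4p} with $\|u\|^{2}+\|w\|^{2}\leqslant C/q$ and letting $\nu\to\infty$. So the factor $\frac{1}{q\alpha}$ originates in an elementary eigenvalue inequality between two nested metrics, not in a limiting analysis of $B_{\varepsilon}^{-1}$. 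Your surrounding scaffolding (regularization by Theorem \ref{thm:approximation}, complete metrics on $X_{c}\backslash Z_{\nu}$, extension across $Z_{\nu}$ by Lemma \ref{lem:extension}, diagonal extraction of weak limits) matches the paper, but without the two-metric estimate --- or an equivalent substitute --- the a priori inequality is never established and the proof does not go through.
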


\begin{proof}
Let $\omega$ be a complete K\"ahler metric on $X$ (every weakly pseudoconvex K\"{a}hler manifold admits a complete K\"{a}ler metric) and let $\omega_{\varepsilon}=\sigma+\varepsilon\omega$.
Let $h_{0}$ be a smooth hermitian metric on $L$, then $h=h_{0}e^{-\varphi}$,
where $\varphi$ is quasi-psh function on $X$ and
\[\mathrm{i}\Theta_{L,h_{0}}+\mathrm{i}\partial\bar{\partial}\varphi\geqslant\alpha\sigma.\]
By Theorem \ref{thm:approximation}, there is a sequence of quasi-psh functions $\{\varphi_{\nu}\}$ defined on $X_{c}$ such that
\begin{enumerate}[(1)]
 \item $\varphi_{\nu}$ is smooth in the complement $X_{c}\backslash Z_{\nu}$ of an analytic set $Z_{\nu}\subset X_{c}$;
\item $\{\varphi_{\nu}\}$ is a decreasing sequence and $\varphi|_{X_{c}}=\lim_{\nu\rightarrow+\infty}\varphi_{\nu}$;
\item $\mathrm{i}\Theta_{L,h_{0}}+\mathrm{i}\partial\bar{\partial}\varphi_{\nu}\geqslant\alpha\sigma-\beta_{\nu}\omega$ on $X_{c}$, where $\lim_{\nu\rightarrow+\infty}\beta_{\nu}=0$.
\end{enumerate}
Now we can find a sequence of hermitian metric
\[h_{\nu}=h_{0}e^{-\varphi_{\nu}}.\]
 on $L|_{X_{c}}$. Then $h_{v}$ is smooth on $X_{c}\backslash Z_{\nu}$, $h_{v}\leqslant h$ and 
 \begin{equation}\label{eq:cur-nu}
 \mathrm{i}\Theta_{L,h_{\nu}}\geqslant\alpha\sigma-\beta_{\nu}\omega.
 \end{equation}
We can choose a complete K\"{a}hler metric 
\[\omega_{c}=2\omega+\mathrm{i}\partial\bar{\partial}\psi_{\nu}+\mathrm{i}\partial\bar{\partial}\log(c-\psi)^{-1}\]
on $X_{c}\backslash Z_{\nu}$, where $\psi_{\nu}$ is a quasi-psh function on $X_{c}$, $\psi_{\nu}$ smooth on $X_{c}\backslash Z_{\nu}$ and $\omega+\mathrm{i}\partial\bar{\partial}\psi_{\nu}\geqslant 0$ (see e.g. \cite{Dem82}). 
Then we define a family
\[\omega_{\varepsilon,\delta}=\sigma+\varepsilon\omega+\delta\omega_{c},\quad \delta>0\]
of complete K\"ahler metric on $X_{c}\backslash Z_{\nu}$.

Let $g\in\mathcal{C}^{n,q}_{0}(X_{c}\backslash Z_{\nu},L)$ 
and let $\varepsilon'<\varepsilon$, $\delta'<\delta$ be two positive numbers.
Let $x\in X_{c}\backslash Z_{\nu}$ be any point, the operator
\begin{equation}\label{eq:defA}
A=\alpha\omega_{\varepsilon',\delta'}\Lambda_{\omega_{\varepsilon,\delta}}
\end{equation}
is positive on $\wedge^{n,q}T^{\ast}_{X,x}\otimes L_{x}$.
We express $\omega_{\varepsilon,\delta}$, $\omega_{\varepsilon',\delta'}$ and $g$ at $x$ as follows:
\begin{equation}
\left\{
\begin{split}
&\omega_{\varepsilon,\delta}=\mathrm{i}\sum_{j=1}^{n}\tau_{j}\wedge\bar{\tau}_{j}, \\
&\omega_{\varepsilon',\delta'}=\mathrm{i}\sum_{j=1}^{n}\lambda_{j}\tau_{j}\wedge\bar{\tau}_{j},\quad 0<\lambda_{j}<1, \\
&g=\sum_{j_{1}<\cdots<j_{q}}g_{j_{1},\cdots,j_{q}}\tau_{1}\wedge\cdots\wedge\tau_{n}\wedge\bar{\tau}_{j_{1}}\wedge\cdots\bar{\tau}_{j_{q}}.
\end{split}\right.
\end{equation}
Then we have
\begin{equation}
\langle Ag,g\rangle_{\omega_{\varepsilon,\delta},h_{\nu}}=\sum_{j_{1}<\cdots<j_{q}}\alpha(\lambda_{j_{1}}+\cdots+\lambda_{j_{q}})\left|g_{j_{1},\cdots,j_{q}}\right|_{h_{\nu}}^{2},
\end{equation}
and hence
\begin{equation}\label{eq:A_inv}
\langle A^{-1}g,g\rangle_{\omega_{\varepsilon,\delta},h_{\nu}}\mathrm{d}V_{\omega_{\varepsilon,\delta}}=\sum_{j_{1}<\cdots<j_{q}}\frac{1}{\alpha\left(\lambda_{j_{1}}+\cdots+\lambda_{j_{q}}\right)}\left|g_{j_{1},\cdots,j_{q}}\right|_{h_{\nu}}^{2}\mathrm{d}V_{\omega_{\varepsilon,\delta}}.
\end{equation}
We can write
\begin{equation}\label{eq:g_snorm}
\langle g,g\rangle_{\omega_{\varepsilon',\delta'},h_{\nu}}\mathrm{d}V_{\omega_{\varepsilon',\delta'}}=\sum_{j_{1}<\cdots<j_{q}}\frac{1}{\prod_{k=1}^{q}\lambda_{j_{k}}} \left|g_{j_{1},\cdots,j_{q}}\right|_{h_{\nu}}^{2}\mathrm{d}V_{\omega_{\varepsilon,\delta}}.
\end{equation}
Comparing \eqref{eq:A_inv} and \eqref{eq:g_snorm}, we have
\begin{equation}
\langle A^{-1}g,g\rangle_{\omega_{\varepsilon,\delta},h_{\nu}}\mathrm{d}V_{\omega_{\varepsilon,\delta}}\leqslant\frac{1}{q\alpha}\langle g,g\rangle_{\omega_{\varepsilon',\delta'},h_{\nu}}\mathrm{d}V_{\omega_{\varepsilon',\delta'}},
\end{equation}
and the integrals
\begin{equation}
\int_{X_{c}\backslash Z_{\nu}}\langle A^{-1}g,g\rangle_{\omega_{\varepsilon,\delta},h_{\nu}}\mathrm{d}V_{\omega_{\varepsilon,\delta}}
\leqslant\frac{1}{q}\int_{X_{c}}\frac{1}{\alpha}\langle g,g\rangle_{\omega_{\varepsilon',\delta'},h_{\nu}}\mathrm{d}V_{\omega_{\varepsilon',\delta'}}.
\end{equation}
Therefore
 \begin{equation}
 \begin{split}
\int_{X_{c}\backslash Z_{\nu}}\langle A^{-1}f,f\rangle_{\omega_{\varepsilon,\delta},h_{\nu}}\mathrm{d}V_{\omega_{\varepsilon,\delta}}
&\leqslant\frac{1}{q}\int_{X_{c}}\frac{1}{\alpha}\langle f,f\rangle_{\omega_{\varepsilon',\delta'},h_{\nu}}\mathrm{d}V_{\omega_{\varepsilon',\delta'}} \\
&\leqslant\frac{1}{q}\int_{X_{c}}\frac{1}{\alpha}\langle f,f\rangle_{\sigma+\varepsilon'\omega,h_{\nu}}\mathrm{d}V_{\sigma+\varepsilon'\omega} \\
&\leqslant\frac{1}{q}\int_{X}\frac{1}{\alpha}\langle f,f\rangle_{\sigma+\varepsilon'\omega,h}\mathrm{d}V_{\sigma+\varepsilon'\omega} \\
&\leqslant\frac{C}{q},
\end{split}
\end{equation}
where the constant
\[C=\lim_{\varepsilon\to 0}\int_{X}\frac{1}{\alpha}\langle f,f\rangle_{\sigma+\varepsilon\omega,h}\mathrm{d}V_{\sigma+\varepsilon\omega}<+\infty\]
For every $v\in L_{n,q}^{2}\left(X_{c}\backslash Z_{\nu},L,\omega_{\varepsilon,\delta},h_{\nu}\right)$ we have
 \begin{equation}
 \begin{split}
 &\left|(f,v)_{\omega_{\varepsilon,\delta},h_{\nu}}\right|^{2} \\
 =&\left|\int_{X_{c}\backslash Z_{\nu}}\langle f,v\rangle_{\omega_{\varepsilon,\delta},h_{\nu}}\mathrm{d}V_{\omega_{\varepsilon,\delta}}\right|^{2} \\
\leqslant &\left|\int_{X_{c}\backslash Z_{\nu}}
 \langle A^{-1}f,f\rangle_{\omega_{\varepsilon,\delta},h_{\nu}}^{\frac{1}{2}}\cdot\langle Av,v\rangle_{\omega_{\varepsilon,\delta},h_{\nu}}^{\frac{1}{2}}\mathrm{d}V_{\omega_{\varepsilon,\delta}}\right|^{2} \\
\leqslant&\int_{X_{c}\backslash Z_{\nu}}\langle A^{-1}f,f\rangle_{\omega_{\varepsilon,\delta},h_{\nu}}\mathrm{d}V_{\omega_{\varepsilon,\delta}}\int_{X_{c}\backslash Z_{\nu}}\langle Av,v\rangle_{\omega_{\varepsilon,\delta},h_{\nu}}\mathrm{d}V_{\omega_{\varepsilon,\delta}} \\
\leqslant&\frac{C}{q}\int_{X_{c}\backslash Z_{\nu}}\langle Av,v\rangle_{\omega_{\varepsilon,\delta},h_{\nu}}\mathrm{d}V_{\omega_{\varepsilon,\delta}}.
\end{split}
\end{equation}
by means of Cauchy-Schwarz inequality.
The operator $A$ is defined by \eqref{eq:defA} and hence
 \begin{equation}
 \begin{split}
\int_{X_{c}\backslash Z_{\nu}}\langle Av,v\rangle_{\omega_{\varepsilon,\delta},h_{\nu}}\mathrm{d}V_{\omega_{\varepsilon,\delta}}
=&\int_{X_{c}\backslash Z_{\nu}}\langle \alpha\sigma\Lambda_{\omega_{\varepsilon,\delta}}v,v\rangle_{\omega_{\varepsilon,\delta},h_{\nu}}\mathrm{d}V_{\omega_{\varepsilon,\delta}} \\
&+\varepsilon'\int_{X_{c}\backslash Z_{\nu}}\langle\alpha\omega\Lambda_{\omega_{\varepsilon,\delta}}v,v\rangle_{\omega_{\varepsilon,\delta},h_{\nu}}\mathrm{d}V_{\omega_{\varepsilon,\delta}} \\
&+\delta'\int_{X_{c}\backslash Z_{\nu}}\langle\alpha\omega_{c}\Lambda_{\omega_{\varepsilon,\delta}}v,v\rangle_{\omega_{\varepsilon,\delta},h_{\nu}}\mathrm{d}V_{\omega_{\varepsilon,\delta}}.
\end{split}
\end{equation} 
The above two inequalities imply
 \begin{equation}
 \begin{split}
\left|(f,v)_{\omega_{\varepsilon,\delta},h_{\nu}}\right|^{2}
\leqslant & \frac{C}{q}\int_{X_{c}\backslash Z_{\nu}}\langle \alpha\sigma\Lambda_{\omega_{\varepsilon,\delta}}v,v\rangle_{\omega_{\varepsilon,\delta},h_{\nu}}\mathrm{d}V_{\omega_{\varepsilon,\delta}} \\
&+\varepsilon'\frac{C}{q}\int_{X_{c}\backslash Z_{\nu}}\langle\alpha\omega\Lambda_{\omega_{\varepsilon,\delta}}v,v\rangle_{\omega_{\varepsilon,\delta},h_{\nu}}\mathrm{d}V_{\omega_{\varepsilon,\delta}} \\
&+\delta'\frac{C}{q}\int_{X_{c}\backslash Z_{\nu}}\langle\alpha\omega_{c}\Lambda_{\omega_{\varepsilon,\delta}}v,v\rangle_{\omega_{\varepsilon,\delta},h_{\nu}}\mathrm{d}V_{\omega_{\varepsilon,\delta}}.
\end{split}
\end{equation} 
Letting $\varepsilon'\to 0$ and $\delta'\to 0$, we have
 \begin{equation}
 \left|(f,v)_{\omega_{\varepsilon,\delta},h_{\nu}}\right|^{2}\leqslant\frac{C}{q}\int_{X_{c}\backslash Z_{\nu}}\langle \alpha\sigma\Lambda_{\omega_{\varepsilon,\delta}}v,v\rangle_{\omega_{\varepsilon,\delta},h_{\nu}}\mathrm{d}V_{\omega_{\varepsilon,\delta}}
 \end{equation} 
By the construction of $h_{\nu}$, 
 \begin{equation}
 \alpha\sigma\leqslant\mathrm{i}\Theta_{L,h_{\nu}}+\beta_{\nu}\omega\leqslant\mathrm{i}\Theta_{L,h_{\nu}}+\beta_{\nu}\omega_{\varepsilon,\delta}.
 \end{equation}
Therefore,
 \begin{equation}
 \begin{split}
\int_{X_{c}\backslash Z_{\nu}}\langle \alpha\sigma\Lambda_{\omega_{\varepsilon,\delta}}v,v\rangle_{\omega_{\varepsilon,\delta},h_{\nu}}\mathrm{d}V_{\omega_{\varepsilon,\delta}}\leqslant
&\int_{X_{c}\backslash Z_{\nu}}\langle \mathrm{i}\Theta_{L,h_{\nu}}\Lambda_{\omega_{\varepsilon,\delta}}v,v\rangle_{\omega_{\varepsilon,\delta},h_{\nu}}\mathrm{d}V_{\omega_{\varepsilon,\delta}} \\
&+q\beta_{\nu}\int_{X_{c}\backslash Z_{\nu}}\langle v,v\rangle_{\omega_{\varepsilon,\delta},h_{\nu}}\mathrm{d}V_{\omega_{\varepsilon,\delta}}.
\end{split}
 \end{equation} 
We thus obtain
\begin{equation}
 \left|(f,v)_{\omega_{\varepsilon,\delta},h_{\nu}}\right|^{2}\leqslant\beta_{\nu}C\|v\|_{\omega_{\varepsilon,\delta},h_{\nu}}^{2}+\frac{C}{q}\left(\mathrm{i}\Theta_{L,h_{\nu}}\Lambda_{\omega_{\varepsilon,\delta}}v,v\right)_{\omega_{\varepsilon,\delta},h_{\nu}}.
 \end{equation} 
By Proposition \ref{prop:BKNE}, if $v\in \mathrm{Dom}\bar{\partial}\cap\mathrm{Dom}\bar{\partial}^{\ast}$, then
\begin{equation}
\left(\mathrm{i}\Theta_{L,h_{\nu}}\Lambda_{\omega_{\varepsilon,\delta}}v,v\right)_{\omega_{\varepsilon,\delta},h_{\nu}}\leqslant
\|\bar{\partial}v\|_{\omega_{\varepsilon,\delta},h_{\nu}}^{2}+\|\bar{\partial}^{\ast}v\|_{\omega_{\varepsilon,\delta},h_{\nu}}^{2}.
\end{equation}
Comparing the above two inequality, we have
\begin{equation}
 \left|(f,v)_{\omega_{\varepsilon,\delta},h_{\nu}}\right|^{2}\leqslant\beta_{\nu}C\|v\|_{\omega_{\varepsilon,\delta},h_{\nu}}^{2}+\frac{C}{q}\left(\|\bar{\partial}v\|_{\omega_{\varepsilon,\delta},h_{\nu}}^{2}+\|\bar{\partial}^{\ast}v\|_{\omega_{\varepsilon,\delta},h_{\nu}}^{2}\right).
\end{equation}
Now, for any $v\in\mathrm{Dom}\bar{\partial}^{\ast}$, let us write
\[v=v_{1}+v_{2},\quad v_{1}\in\mathrm{Ker}\bar{\partial},\quad v_{2}\in\left(\mathrm{Ker}\bar{\partial}\right)^{\perp}\subset\mathrm{Ker}\bar{\partial}^{\ast}.\]
Then $\bar{\partial}v_{1}=0$ and $\bar{\partial}^{\ast}v_{2}=0$.
Since $\bar{\partial}f$=0, we get
\begin{equation}
\begin{split}
 \left|(v,f)_{\omega_{\varepsilon,\delta},h_{\nu}}\right|^{2}
 =&\left|(v_{1},f)_{\omega_{\varepsilon,\delta},h_{\nu}}\right|^{2} \\
 \leqslant&\beta_{\nu}C\|v_{1}\|_{\omega_{\varepsilon,\delta},h_{\nu}}^{2}+\frac{C}{q}\|\bar{\partial}^{\ast}v_{1}\|_{\omega_{\varepsilon,\delta},h_{\nu}}^{2} \\
 \leqslant&\beta_{\nu}C\|v\|_{\omega_{\varepsilon,\delta},h_{\nu}}^{2}+\frac{C}{q}\|\bar{\partial}^{\ast}v\|_{\omega_{\varepsilon,\delta},h_{\nu}}^{2} \\
 =&\frac{C}{q}\left(\left\|\sqrt{q\beta_{\nu}}\cdot v\right\|_{\omega_{\varepsilon,\delta},h_{\nu}}^{2}+\left\|\bar{\partial}^{\ast}v\right\|_{\omega_{\varepsilon,\delta},h_{\nu}}^{2} \right).
 \end{split}
\end{equation}
The Hahn-Banach theorem shows that the bounded linear functional
\begin{equation}
\left(\bar{\partial}^{\ast}v,\sqrt{q\beta_{\nu}}\cdot v\right)\mapsto\left(v,f\right).
\end{equation}
can be extended to a linear functional on
\[L_{n,q}^{2}\left(X_{c}\backslash Z_{\nu},L,\omega_{\varepsilon,\delta},h_{\nu}\right)\times L_{n,q}^{2}\left(X_{c}\backslash Z_{\nu},L,\omega_{\varepsilon,\delta},h_{\nu}\right)\]
whose norm is bounded by $\sqrt{\frac{C}{q}}$.
So we can find $u_{\varepsilon,c,\nu,\delta}$ and $w_{\varepsilon,c,\nu,\delta}$ such that
\begin{equation}
\left(\bar{\partial}^{\ast}v,u_{\varepsilon,c,\nu,\delta}\right)+\left(\sqrt{q\beta_{\nu}}\cdot v,w_{\varepsilon,c,\nu,\delta}\right)=\left(v,f\right),
\end{equation}
and
\begin{equation}
\left\|u_{\varepsilon,c,\nu,\delta}\right\|_{\omega_{\varepsilon,\delta},h_{\nu}}^{2}+\left\|w_{\varepsilon,c,\nu,\delta}\right\|_{\omega_{\varepsilon,\delta},h_{\nu}}^{2}\leqslant\frac{C}{q}.
\end{equation}
Therefore
\begin{equation}\label{eq:dbar4p}
\bar{\partial}u_{\varepsilon,c,\nu,\delta}+\sqrt{q\beta_{\nu}}\cdot w_{\varepsilon,c,\nu,\delta}=f,
\end{equation}
and 
\begin{equation}
\int_{X_{c}\backslash Z_{\nu}}\left|u_{\varepsilon,c,\nu,\delta}\right|^{2}_{\omega_{\varepsilon,\delta},h_{\nu}}\mathrm{d}V_{\omega_{\varepsilon,\delta}}+\int_{X_{c}\backslash Z_{\nu}}\left|w_{\varepsilon,c,\nu,\delta}\right|^{2}_{\omega_{\varepsilon,\delta},h_{\nu}}\mathrm{d}V_{\omega_{\varepsilon,\delta}}\leqslant\frac{C}{q}.
\end{equation}
Since the norm $\|\cdot\|_{\omega_{\varepsilon,\delta},h_{\nu}}$ is increasing as $\delta\to 0$, these uniform bounds imply that there are subsequences $u_{\varepsilon,c,\nu,\delta_{j}}$ and $w_{\varepsilon,c,\nu,\delta_{j}}$ with $\delta_{j}\to 0$, possessing weak-$L^{2}$ limits
\[u_{\varepsilon,c,\nu}=\lim_{j\to+\infty}u_{\varepsilon,c,\nu,\delta_{j}},\quad w_{\varepsilon,c,\nu}=\lim_{j\to+\infty}w_{\varepsilon,c,\nu,\delta_{j}},\]
and
\begin{equation}
\left\|u_{\varepsilon,c,\nu}\right\|_{\omega_{\varepsilon,\delta},h_{\nu}}^{2}+\left\|w_{\varepsilon,c,\nu}\right\|_{\omega_{\varepsilon,\delta},h_{\nu}}^{2}\leqslant\frac{C}{q}.
\end{equation}
Letting $\delta\to 0$, we obtain
\begin{equation}\label{eq:estimate3p}
\left\|u_{\varepsilon,c,\nu}\right\|_{\omega_{\varepsilon},h_{\nu}}^{2}+\left\|w_{\varepsilon,c,\nu}\right\|_{\omega_{\varepsilon},h_{\nu}}^{2}\leqslant\frac{C}{q},
\end{equation}
where $\omega_{\varepsilon}=\sigma+\varepsilon\omega$ and hence
$u_{\varepsilon,c,\nu}\in L_{n,q-1}^{2}\left(X_{c},L,\omega_{\varepsilon},h_{\nu}\right)$, $w_{\varepsilon,c,\nu}\in L_{n,q}^{2}\left(X_{c},L,\omega_{\varepsilon},h_{\nu}\right)$.
Then the equation \eqref{eq:dbar4p} implies that
\begin{equation}\label{eq:dbar3p}
\bar{\partial}u_{\varepsilon,c,\nu}+\sqrt{q\beta_{\nu}}\cdot w_{\varepsilon,c,\nu}=f
\end{equation}
on $X_{c}\backslash Z_{\nu}$.
By Lemma \ref{lem:extension}, the above equation can be extended to $X_{c}$.
By \eqref{eq:estimate3p}, we can choose weakly convergent subsequences and obtain weak limits
\[u_{\varepsilon,c}=\lim_{j\to+\infty}u_{\varepsilon,c,\nu_{j}},\quad w_{\varepsilon,c}=\lim_{j\to+\infty}w_{\varepsilon,c,\nu_{j}},\]
with estimates
\begin{equation}
\left\|u_{\varepsilon,c}\right\|_{\omega_{\varepsilon},h_{\nu}}^{2}+\left\|w_{\varepsilon,c}\right\|_{\omega_{\varepsilon},h_{\nu}}^{2}\leqslant\frac{C}{q},
\end{equation}
Letting $\nu\to+\infty$, we get
\begin{equation}
\left\|u_{\varepsilon,c}\right\|_{\omega_{\varepsilon},h}^{2}+\left\|w_{\varepsilon,c}\right\|_{\omega_{\varepsilon},h}^{2}\leqslant\frac{C}{q},
\end{equation}
Then by \eqref{eq:dbar3p},
\begin{equation}\label{eq:dbar2p}
\bar{\partial}u_{\varepsilon,c}=f
\end{equation}
since $\beta_{\nu}\to 0$ by construction.
We get therefore solutions $u_{\varepsilon,c}$ on $X_{c}$ with uniform $L^{2}$ bounds. Again, we can extract a weakly convergent subsequence to obtain
$u_{\varepsilon}$ such that
\[\bar{\partial}u_{\varepsilon}=f \quad\text{and}\quad\left\|u_{\varepsilon}\right\|_{\omega_{\varepsilon},h}^{2}\leqslant\frac{C}{q}.\]
Finally we choose a weakly convergent subsequence of $u_{\varepsilon}$ to get   a weak limit $u$ such that $\bar{\partial}u=f$ and 
\begin{equation}
\lim_{\varepsilon\to0}\int_{X}\left|u\right|^{2}_{\sigma+\varepsilon\omega,h}\mathrm{d}V_{\sigma+\varepsilon\omega}\leqslant\lim_{\varepsilon\to 0}\int_{X}\frac{1}{q\alpha}\left|f\right|^{2}_{\sigma+\varepsilon\omega,h}\mathrm{d}V_{\sigma+\varepsilon\omega}.
\end{equation}
\end{proof}

\begin{rmk}
In fact, we can only assume $X$ is complete K\"ahler instead of weakly pseudoconvex K\"ahler. 
Indeed, this fact can be proved by using the above arguments and the methods in \cite{Dem82}(p.475, Theorem 5.1).
We will not give the details here since Theorem \ref{thm:existence} is enough for our applications in next section, \end{rmk}


\section{Ohsawa-Nadel type vanishing theorem}

Let $Y$ be a paracompact reduced complex analytic space. 
A function $\eta$ on $Y$ is said to be smooth if for any point $y\in Y$, there exist a neighborhood $V$, a holomorphic embedding $\iota: V\hookrightarrow\mathbb{C}^{N}$ and a smooth function $\tilde{\eta}$ defined on a neighborhood of $\iota(V)$ such that $\eta=\iota^{\ast}\tilde{\eta}$.
By a hermitian metric on $Y$, we mean a hermitian metric $\sigma$ defined on the regular points of Y satisfying the following property: 
for any point $y\in Y$, there exist a neighborhood $V$, a holomorphic embedding $\iota: V\hookrightarrow\mathbb{C}^{N}$ and a smooth positive $(1,1)$-form $\tilde{\sigma}$ defined on a neighborhood of $\iota(V)$ for which $\sigma=\iota^{\ast}\tilde{\sigma}$ on the regular points of $V$. 
We say $\sigma$ is a K\"ahler metric if we can choose $\tilde{\sigma}$ to be $d$-closed. 
For any holomorphic map $f: X\to Y$ from a complex manifold $X$, $f^{\ast}\sigma$ can be extended uniquely to a smooth semipositive $(1,1)$-form on $X$. 

\begin{lem}[\cite{Oh84}]\label{lem:pullbackdec}
 Let $\pi: X\to Y$ be a holomorphic map between complex manifolds X and Y provided with hermitian metrics $\omega_{X}$ and $\omega_{Y}$ respectively. Then, for any form $g$ on $Y$,
\[\left|(\pi^{\ast}g)_{x}\right|_{\omega_{X}+\pi^{\ast}\omega_{Y}}\leqslant |g_{\pi(x)}|_{\omega_{Y}}\]
at any point $x\in X$.
\end{lem}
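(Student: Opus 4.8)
The statement is pointwise, so the plan is to fix $x\in X$, set $y=\pi(x)$, and prove the corresponding inequality between the induced norms on the exterior algebras $\wedge^{\bullet}T^{\ast}_{X,x}$ and $\wedge^{\bullet}T^{\ast}_{Y,y}$. Since $\pi$ is holomorphic, $\pi^{\ast}$ preserves bidegree and both norms decompose orthogonally into bidegree components, so it is enough to treat a form $g$ of pure type $(p,q)$. Write $A\colon T^{1,0}_{X,x}\to T^{1,0}_{Y,y}$ for the $\mathbb{C}$-linear part of $d\pi_{x}$; then $\pi^{\ast}$ on forms is induced by the transpose of $A$, and $\pi^{\ast}\omega_{Y}$ is the semi-positive Hermitian form $u\mapsto|Au|^{2}_{\omega_{Y}}$ on $T^{1,0}_{X,x}$, so that $\omega_{X}+\pi^{\ast}\omega_{Y}$ is again a positive definite Hermitian metric there.

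The main step is to diagonalize $\omega_{X}$, $\pi^{\ast}\omega_{Y}$ and $A$ simultaneously by a singular value decomposition of $A$ adapted to its kernel; this is the only point that needs a little care, after which the estimate is automatic. Let $K=\ker A$ and $r=\dim A(T^{1,0}_{X,x})$. I would choose an $\omega_{X}$-orthonormal basis $f_{1},\dots,f_{n}$ of $T^{1,0}_{X,x}$ whose last $n-r$ vectors span $K$, an $\omega_{Y}$-orthonormal basis $e_{1},\dots,e_{m}$ of $T^{1,0}_{Y,y}$, and, applying the SVD to $A$ restricted to the $\omega_{X}$-orthogonal complement of $K$, arrange that $Af_{j}=\mu_{j}e_{j}$ with $\mu_{j}>0$ for $j\le r$ and $Af_{j}=0$ for $j>r$. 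With $\{f_{j}^{\ast}\}$, $\{e_{j}^{\ast}\}$ the dual bases and $e_{I}^{\ast}=e_{i_{1}}^{\ast}\wedge\cdots\wedge e_{i_{p}}^{\ast}$ for an increasing multi-index $I=(i_{1},\dots,i_{p})$ (and likewise $f_{I}^{\ast}$), one reads off $\pi^{\ast}e_{i}^{\ast}=\mu_{i}f_{i}^{\ast}$ for $i\le r$, $\pi^{\ast}e_{i}^{\ast}=0$ for $i>r$, and
\[
\omega_{X}+\pi^{\ast}\omega_{Y}=\mathrm{i}\sum_{j=1}^{r}(1+\mu_{j}^{2})\,f_{j}^{\ast}\wedge\bar{f}_{j}^{\ast}+\mathrm{i}\sum_{j=r+1}^{n}f_{j}^{\ast}\wedge\bar{f}_{j}^{\ast}.
\]
Hence $\{f_{j}^{\ast}\}$ is orthogonal for $\omega_{X}+\pi^{\ast}\omega_{Y}$ with $|f_{j}^{\ast}|^{2}=(1+\mu_{j}^{2})^{-1}$ for $j\le r$ and $|f_{j}^{\ast}|^{2}=1$ for $j>r$, so $\{f_{I}^{\ast}\wedge\bar{f}_{J}^{\ast}\}$ is an orthogonal frame of $\wedge^{p,q}T^{\ast}_{X,x}$.

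Expanding $g=\sum_{|I|=p,\,|J|=q}g_{IJ}\,e_{I}^{\ast}\wedge\bar{e}_{J}^{\ast}$, so that $|g|^{2}_{\omega_{Y}}=\sum_{I,J}|g_{IJ}|^{2}$, only the terms with $I,J\subseteq\{1,\dots,r\}$ survive under $\pi^{\ast}$, and using the two formulas above one computes
\[
\left|\pi^{\ast}g\right|^{2}_{\omega_{X}+\pi^{\ast}\omega_{Y}}=\sum_{I,J\subseteq\{1,\dots,r\}}|g_{IJ}|^{2}\prod_{i\in I}\frac{\mu_{i}^{2}}{1+\mu_{i}^{2}}\prod_{j\in J}\frac{\mu_{j}^{2}}{1+\mu_{j}^{2}}.
\]
Since $0\le\mu^{2}/(1+\mu^{2})<1$, each product is $\le 1$, so $\left|\pi^{\ast}g\right|^{2}_{\omega_{X}+\pi^{\ast}\omega_{Y}}\le\sum_{I,J\subseteq\{1,\dots,r\}}|g_{IJ}|^{2}\le\sum_{I,J}|g_{IJ}|^{2}=|g|^{2}_{\omega_{Y}}$, which is the claim; the general case follows by summing over bidegrees. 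In short, the pullback contracts each surviving cotangent slot $e_{i}^{\ast}$ to $\mu_{i}f_{i}^{\ast}$, whose $(\omega_{X}+\pi^{\ast}\omega_{Y})$-norm is $\mu_{i}/\sqrt{1+\mu_{i}^{2}}<1$ times that of $e_{i}^{\ast}$, while the directions in $\ker A$ and the directions of $Y$ missed by $A$ simply drop out — so no slot ever increases the norm.
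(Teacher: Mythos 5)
Your argument is correct: the singular value decomposition of the complexified differential $A$ simultaneously diagonalizes $\omega_{X}$, $\pi^{\ast}\omega_{Y}$ and the pullback on covectors, each surviving slot $e_{i}^{\ast}\mapsto\mu_{i}f_{i}^{\ast}$ contracts the norm by the factor $\mu_{i}/\sqrt{1+\mu_{i}^{2}}\leqslant 1$, and the reduction to pure bidegree is legitimate since $\pi^{\ast}$ preserves types. The paper itself gives no proof of this lemma (it is quoted from Ohsawa \cite{Oh84}), and your computation is exactly the standard pointwise linear-algebra argument one would expect there, so there is nothing to flag.
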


The purpose of this section is to study the cohomology groups.
Using the ideal of Ohsawa, we can prove the following result.

\begin{thm}
Let $X$ be a weakly pseudoconvex K\"ahler manifold.
let $f:X\to Y$ be a proper holomorphic map to a paracompact complex space $Y$ with a K\"ahler metric $\sigma$, and let $L$ be a line bundle over X equipped with a singular Hermitian metric. Assume that $\mathrm{i}\Theta_{L,h}\geqslant\delta\cdot f^{\ast}\sigma$ (in the sense of current) for some continuous positive function $\delta$ on $X$, then
\[H^{q}\left(Y,f_{\ast}\left(K_{X}\otimes L\otimes\mathcal{I}(h)\right)\right)=0\quad\text{for}\quad q\geqslant 1.\]
\end{thm}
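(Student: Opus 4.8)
The plan is to reduce the statement to a standard argument combining the Leray spectral sequence, a Dolbeault-type resolution of the coherent sheaf $f_{*}(K_{X}\otimes L\otimes\mathcal{I}(h))$, and the $L^{2}$-existence Theorem~\ref{thm:existence}. First I would recall that, since $f$ is proper, the Leray spectral sequence for $f$ together with the Grauert direct image theorem (or rather the identification of the higher direct images) reduces the cohomology on $Y$ to cohomology on $X$; concretely, using that $R^{j}f_{*}(K_{X}\otimes L\otimes\mathcal{I}(h))$ can be controlled, the key point is to show that any class in $H^{q}$ is represented by a global $\bar\partial$-closed $(n,q)$-form on $X$ with values in $L$ whose coefficients lie in the multiplier ideal $\mathcal{I}(h)$ and which, after passing to the pseudo-metric $\sigma':=f^{*}\sigma$, satisfies the finiteness condition
\[
\lim_{\varepsilon\to 0}\int_{X}\frac{1}{\delta}\,|f|^{2}_{f^{*}\sigma+\varepsilon\omega,\,h}\,\mathrm{d}V_{f^{*}\sigma+\varepsilon\omega}<+\infty.
\]
Here one uses Lemma~\ref{lem:pullbackdec}: the pull-back of a form from $Y$ has pointwise norm with respect to $\omega+f^{*}\sigma$ bounded by its norm on $Y$, so local representatives pulled back from $Y$ automatically have finite weighted $L^{2}$-norm in the pseudo-metric $f^{*}\sigma$, because the Kähler form $\sigma$ on $Y$ is locally the restriction of a smooth form on $\mathbb{C}^{N}$ and $f$ is proper.

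Next I would set up the Dolbeault complex on $X$ adapted to the metric: consider the sheaf complex whose sections over an open $V\subset Y$ are the $L$-valued $(n,\bullet)$-forms $u$ on $f^{-1}(V)$ which, together with $\bar\partial u$, lie in $L^{2}_{n,\bullet}(f^{-1}(V),L,f^{*}\sigma,h)$ locally near the fibres, with the weight $\tfrac{1}{\delta}$-integrability built in. The sheaf $\mathcal{I}(h)$ is coherent, and the standard fact (as in the proof of the classical Nadel theorem) is that
\[
0\longrightarrow K_{X}\otimes L\otimes\mathcal{I}(h)\longrightarrow \mathcal{F}^{0}\longrightarrow \mathcal{F}^{1}\longrightarrow\cdots
\]
is an exact sequence of sheaves on $Y$, where $\mathcal{F}^{q}=f_{*}(\text{$L^{2}$-}(n,q)\text{-forms})$; exactness at $\mathcal{F}^{q}$, $q\geq 1$, is precisely a local solvability statement for $\bar\partial$, which is where Theorem~\ref{thm:existence} enters, applied over the weakly pseudoconvex Kähler pieces $f^{-1}(V)$ (shrinking $V$ and using that $Y$ is paracompact to reduce to such pieces, or working directly on $X$ which is already assumed weakly pseudoconvex Kähler). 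The curvature hypothesis $\mathrm{i}\Theta_{L,h}\geq\delta\cdot f^{*}\sigma$ is exactly the hypothesis of Theorem~\ref{thm:existence} with $\alpha=\delta$ and the semi-positive $d$-closed form taken to be $f^{*}\sigma$.

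Finally, granting that the $\mathcal{F}^{q}$ are acyclic on the paracompact space $Y$ (they are fine or at least soft, being modules over the sheaf of smooth functions on $Y$ — one checks partitions of unity pull back and multiply $L^{2}$-forms without destroying the integrability, since the cutoffs are bounded with bounded differentials), the resolution computes sheaf cohomology: $H^{q}(Y,f_{*}(K_{X}\otimes L\otimes\mathcal{I}(h)))$ is the $q$-th cohomology of the complex of global sections $\Gamma(Y,\mathcal{F}^{\bullet})$. A global section of $\mathcal{F}^{q}$ that is $\bar\partial$-closed is a $\bar\partial$-closed $L$-valued $(n,q)$-form on $X$ with the required $L^{2}$-finiteness in the pseudo-metric $f^{*}\sigma$, so Theorem~\ref{thm:existence} produces $u$ with $\bar\partial u=f$ and the norm estimate; in particular $u\in\Gamma(Y,\mathcal{F}^{q-1})$, so the class vanishes. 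Hence $H^{q}=0$ for $q\geq 1$.

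\textbf{Main obstacle.} The analytically delicate point is the softness/acyclicity of the sheaves $\mathcal{F}^{q}$ on $Y$ and, hand in hand with it, the exactness of the resolution at each spot $q\geq 1$: one must verify that $\bar\partial$ is locally solvable in the prescribed $L^{2}$-class over preimages $f^{-1}(V)$ of small opens $V\subset Y$, which requires that such preimages be (or be exhausted by) weakly pseudoconvex Kähler manifolds on which Theorem~\ref{thm:existence} applies with the pulled-back pseudo-metric, and that the local finiteness condition involving the weight $1/\delta$ and the degenerating volume forms $\mathrm{d}V_{f^{*}\sigma+\varepsilon\omega}$ behaves well under restriction, multiplication by bump functions, and passage to the limit $\varepsilon\to 0$ — this is exactly the role played by Lemma~\ref{lem:pullbackdec} and the local embedding description of the Kähler metric $\sigma$ on the complex space $Y$. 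Matching the multiplier ideal $\mathcal{I}(h)=\mathcal{I}(\psi)$ with the condition "$L^{2}$ with respect to $h$" at the level of the $(n,0)$-part, and checking that it is preserved by the resolution, is the other place where some care (but no new idea beyond Nadel's original argument) is needed.
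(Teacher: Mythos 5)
Your proposal reaches the same destination by the ``dual'' route: you resolve $f_{*}\bigl(K_{X}\otimes L\otimes\mathcal{I}(h)\bigr)$ by fine sheaves of $L^{2}$ forms on $Y$ and invoke the abstract De Rham--Weil isomorphism, whereas the paper fixes a locally finite Stein covering $\{V_{\alpha}\}$ of $Y$, pulls a \v{C}ech $q$-cocycle back to $X$, and runs the \v{C}ech--Dolbeault staircase explicitly: a pulled-back partition of unity produces a global $\bar\partial$-closed $(n,q)$-form $b$, Theorem~\ref{thm:existence} is applied once on all of $X$ and then repeatedly on the sets $f^{-1}(V_{\alpha_{0}\cdots\alpha_{j}})$ (holomorphically convex, hence weakly pseudoconvex, since the $V$'s are Stein and $f$ is proper) to descend step by step to a $(q-1)$-cochain of holomorphic sections cobounding the original cocycle. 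The two arguments use identical analytic inputs --- Theorem~\ref{thm:existence} with $\alpha=\delta$ and $\sigma$ replaced by $f^{*}\sigma$, plus Lemma~\ref{lem:pullbackdec} to bound $|\bar\partial\rho_{\nu}|_{\varepsilon\omega+f^{*}\sigma}$ uniformly in $\varepsilon$ --- so your version buys conceptual economy at the price of having to verify softness and exactness of the resolution, while the paper's version makes every estimate explicit. Your opening appeal to the Leray spectral sequence is a red herring (it would require control of $R^{j}f_{*}$, which is not available and not needed); the resolution argument you then give does not use it.

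One step in your final paragraph is genuinely incomplete: a global section of a sheaf $\mathcal{F}^{q}$ defined by \emph{local} $L^{2}$ conditions over $Y$ does \emph{not} automatically satisfy the global finiteness hypothesis $\lim_{\varepsilon\to 0}\int_{X}\frac{1}{\delta}|b|^{2}_{f^{*}\sigma+\varepsilon\omega,h}\,\mathrm{d}V_{f^{*}\sigma+\varepsilon\omega}<+\infty$ of Theorem~\ref{thm:existence} when $Y$ is non-compact, so you cannot ``produce $u$'' directly. The paper repairs exactly this point by replacing $h$ with $he^{-\chi\circ\phi}$, where $\phi$ is a smooth psh exhaustion of $X$ and $\chi$ is a convex increasing function chosen (uniformly in $\varepsilon$) to make the weighted integral finite; since $\chi\circ\phi$ is psh this does not disturb the curvature hypothesis, and since it is smooth it does not change $\mathcal{I}(h)$ or the conclusion. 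The same device is needed on each $f^{-1}(V)$ when you verify exactness of the resolution (there one can alternatively shrink $V$ and use that $1/\delta$ is bounded on the relatively compact preimage). With this weight inserted, your argument goes through.
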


\begin{proof}
Let $\mathcal{V}=\left\{V_{\alpha}\right\}_{\alpha\in I}$ be a locally finite Stein open covering of $Y$. We may assume that $V_{\alpha}\Subset Y$.
For the sake of simplicity, we denote
\[V_{\alpha_{0}\alpha_{1}\cdots\alpha_{q}}=V_{\alpha_{0}}\cap V_{\alpha_{1}}\cap\cdots\cap V_{\alpha_{q}}.\]
Let 
\[c=\left(c_{\alpha_{0}\alpha_{1}\cdots\alpha_{q}}\right)\in\prod_{(\alpha_{0},\alpha_{1},\cdots,\alpha_{q})\in I^{q+1}}
\Gamma\left(V_{\alpha_{0}\alpha_{1}\cdots\alpha_{q}}, f_{\ast}\left(K_{X}\otimes L\otimes\mathcal{I}(h)\right)\right)\] 
be a $q$-cocycle of $f_{\ast}\left(K_{X}\otimes L\otimes\mathcal{I}(h)\right)$ associated to $\mathcal{V}$.
We set
\[c^{\ast}_{\alpha_{0}\alpha_{1}\cdots\alpha_{q}}=f^{\ast}c_{\alpha_{0}\alpha_{1}\cdots\alpha_{q}},\quad U_{\alpha_{0}\alpha_{1}\cdots\alpha_{q}}=f^{-1}\left(V_{\alpha_{0}\alpha_{1}\cdots\alpha_{q}}\right).\]
Then $\mathcal{U}=\left\{f^{-1}\left(V_{\alpha}\right)\right\}_{\alpha\in I}$ is an open covering of $X$ and
\[\left(c^{\ast}_{\alpha_{0}\alpha_{1}\cdots\alpha_{q}}\right)\in\prod_{(\alpha_{0},\alpha_{1},\cdots,\alpha_{q})\in I^{q+1}}\Gamma\left(U_{\alpha_{0}\alpha_{1}\cdots\alpha_{q}}, K_{X}\otimes L\otimes\mathcal{I}(h)\right)\] 
is a cocycle of $K_{X}\otimes L\otimes\mathcal{I}(h)$ associated to $\mathcal{U}$.
Let $\eta_{\nu}\in\mathcal{C}^{\infty}_{0}(Y)$ be a partition of unity associated to $\mathcal{V}$. Then $\rho_{\nu}=f^{\ast}\eta_{\nu}\in\mathcal{C}^{\infty}_{0}(X)$ is a partition of unity associated to $\mathcal{U}$.
Let
\begin{equation}
b_{\alpha_{0}\alpha_{1}\cdots\alpha_{q-1}}=\sum_{\nu\in I}\rho_{\nu}c^{\ast}_{\nu\alpha_{0}\cdots\alpha_{q-1}}.
\end{equation}
be smooth $(n,0)$-forms with support in $U_{\alpha_{0}\cdots\alpha_{q-1}}$.
Since $\left(c^{\ast}_{\alpha_{0}\alpha_{1}\cdots\alpha_{q}}\right)$ is a cocycle, we have
\[c^{\ast}_{\alpha_{0}\alpha_{1}\cdots\alpha_{q}}=\sum_{j=0}^{q}(-1)^{j}c^{\ast}_{\nu\alpha_{0}\cdots\widehat{\alpha}_{j}\cdots\alpha_{q}} \quad\text{on}\quad U_{\nu\alpha_{0}\cdots\alpha_{q}}.\]
Then
\begin{equation}
\begin{split}
\sum_{j=0}^{q}(-1)^{j}b_{\alpha_{0}\cdots\widehat{\alpha}_{j}\cdots\alpha_{q}}
&=\sum_{j=0}^{q}(-1)^{j}\sum_{\nu\in I}\rho_{\nu}c^{\ast}_{\nu\alpha_{0}\cdots\alpha_{q-1}} \\
&=\sum_{\nu\in I}\rho_{\nu}\sum_{j=0}^{q}(-1)^{j}c^{\ast}_{\nu\alpha_{0}\cdots\alpha_{q-1}} \\
&=\sum_{\nu\in I}\rho_{\nu}c^{\ast}_{\alpha_{0}\alpha_{1}\cdots\alpha_{q}} \\
&=c^{\ast}_{\alpha_{0}\alpha_{1}\cdots\alpha_{q}}.
\end{split}
\end{equation}
So we get
\begin{equation}\label{eq:wsolvec}
c^{\ast}_{\alpha_{0}\alpha_{1}\cdots\alpha_{q}}=\sum_{j=0}^{q}(-1)^{j}b_{\alpha_{0}\cdots\widehat{\alpha}_{j}\cdots\alpha_{q}},
\end{equation}
and
\begin{equation}
\sum_{j=0}^{q}(-1)^{j}\bar{\partial}b_{\alpha_{0}\cdots\widehat{\alpha}_{j}\cdots\alpha_{q}}=0
\end{equation}
since $c^{\ast}_{\alpha_{0}\alpha_{1}\cdots\alpha_{q}}$ is $\bar{\partial}$-closed.
For $1\leqslant k\leqslant q-1$, we can obtain inductively smooth $(n,k)$-forms on $U_{\alpha_{0}\cdots\alpha_{q-k-1}}$
\begin{equation}\label{eq:defbalpha}
b_{\alpha_{0}\alpha_{1}\cdots\alpha_{q-k-1}}=\sum_{\nu\in I}\rho_{\nu}\bar{\partial}b_{\nu\alpha_{0}\cdots\alpha_{q-k-1}}
\end{equation}
such that
\begin{equation}\label{eq:dbarb}
\bar{\partial}b_{\alpha_{0}\alpha_{1}\cdots\alpha_{q-k}}=\sum_{j=0}^{q-k}(-1)^{j}b_{\alpha_{0}\cdots\widehat{\alpha}_{j}\cdots\alpha_{q-k}},
\end{equation}
and
\begin{equation}
\sum_{j=0}^{q-k}(-1)^{j}\bar{\partial}b_{\alpha_{0}\cdots\widehat{\alpha}_{j}\cdots\alpha_{k}}=0.
\end{equation}
When $k=q-1$,
\[\bar{\partial}b_{\alpha_{0}}-\bar{\partial}b_{\alpha_{0}}=0,\]
so we can define a smooth $(n,q)$-form $b$ on $X$ by
\[b=\bar{\partial}b_{\alpha_{0}}, \quad \forall\alpha_{0}\in I.\]
By \eqref{eq:defbalpha}, we have
\begin{equation}
b_{\alpha_{0}\cdots\alpha_{q-k-1}}=\sum_{\nu_{0},\cdots,\nu_{k}\in I}
\rho_{\nu_{k}}\bar{\partial}\rho_{\nu_{k-1}}\wedge\cdots\wedge\bar{\partial}\rho_{\nu_{0}}\wedge c^{\ast}_{\nu_{0}\nu_{1}\cdots\nu_{k}\alpha_{0}\cdots\alpha_{q-k-1}},
\end{equation}
and hence
\begin{equation}
b=\sum_{\nu_{0},\cdots,\nu_{q-1}\in I}
\bar{\partial}\rho_{\nu_{q-1}}\wedge\cdots\wedge\bar{\partial}\rho_{\nu_{0}}\wedge c^{\ast}_{\nu_{0}\nu_{1}\cdots\nu_{q-1}\alpha_{0}}.
\end{equation}

Let $\omega$ be a K\"ahler metric on $X$.
Since $c^{\ast}_{\alpha_{0}\alpha_{1}\cdots\alpha_{q}}$ are holomorphic $(n,0)$-forms, 
\[\left|c^{\ast}_{\alpha_{0}\alpha_{1}\cdots\alpha_{q}}\right|^{2}_{\varepsilon\omega+f^{\ast}\sigma,h}\mathrm{d}V_{\varepsilon\omega+f^{\ast}\sigma}\]
is independent of $\varepsilon>0$ and
\begin{equation}
\int_{K}\left|c^{\ast}_{\alpha_{0}\alpha_{1}\cdots\alpha_{q}}\right|^{2}_{\varepsilon\omega+f^{\ast}\sigma,h}\mathrm{d}V_{\varepsilon\omega+f^{\ast}\sigma}<+\infty
\end{equation}
for $K\Subset U_{\alpha_{0}\cdots\alpha_{q}}$ by definition of $\mathcal{I}(h)$.
Locally, $\eta_{\nu}$ (resp. $\sigma$) can be seen as the pull back of a smooth function (resp. K\"ahler form) on an open subset of $\mathbb{C}^{n}$.
By Lemma \ref{lem:pullbackdec}, $\left|\bar{\partial}\eta_{\nu}\right|_{\sigma}$ are locally bounded above. 
Then, again by Lemma \ref{lem:pullbackdec}, $\left|\bar{\partial}\rho_{\nu}\right|_{\varepsilon\omega+f^{\ast}\sigma}=\left|f^{\ast}\bar{\partial}\eta_{\nu}\right|_{\varepsilon\omega+f^{\ast}\sigma}$ are bounded by $\left|\bar{\partial}\eta_{\nu}\right|_{\sigma}$ which are independent of $\varepsilon>0$.
Therefore, for $1\leqslant k\leqslant q$ and $K\Subset U_{\alpha_{0}\cdots\alpha_{q-k}}$,
\begin{equation}\label{eq:localfinite}
\int_{K}\left|b_{\alpha_{0}\cdots\alpha_{q-k}}\right|^{2}_{\varepsilon\omega+f^{\ast}\sigma,h}\mathrm{d}V_{\varepsilon\omega+f^{\ast}\sigma}\leqslant C(K)<+\infty,
\end{equation}
and for $L\Subset X$
\begin{equation}
\int_{L}\left|b\right|^{2}_{\varepsilon\omega+f^{\ast}\sigma,h}\mathrm{d}V_{\varepsilon\omega+f^{\ast}\sigma}\leqslant C(L)<+\infty,
\end{equation}
where the constants $C(K)$ and $C(L)$ are independent of $\varepsilon>0$.

Since $X$ is weakly pseudoconvex,  there exists a smooth psh exhaustion function $\phi$ on $X$. Then we can also find a smooth convex increasing function $\chi$ such that
\begin{equation}
\int_{X}\frac{1}{\delta}\left|b\right|^{2}_{\varepsilon\omega+f^{\ast}\sigma,h}e^{-\chi\circ\phi}\mathrm{d}V_{\varepsilon\omega+f^{\ast}\sigma}\leqslant C<+\infty.
\end{equation}
where $C$ is independent of $\varepsilon$. Then
\begin{equation}
\lim_{\varepsilon\to 0}\int_{X}\frac{1}{\delta}\left|b\right|^{2}_{\varepsilon\omega+f^{\ast}\sigma,h}e^{-\chi\circ\phi}\mathrm{d}V_{\varepsilon\omega+f^{\ast}\sigma}<+\infty.
\end{equation}
By assumption 
\begin{equation}
\mathrm{i}\Theta_{L,he^{-\chi\circ\phi}}=\mathrm{i}\Theta_{L,h}+\mathrm{i}\partial\bar{\partial}(\chi\circ\phi)\geqslant\delta\cdot f^{\ast}\sigma,
\end{equation} 
so we can apply Theorem \ref{thm:existence} to find
\begin{equation}\label{eq:localfinite0}
a\in L^{2}_{n,q-1}\left(X,L,f^{\ast}\sigma,he^{-\chi\circ\phi}\right).
\end{equation}
satisfying \[\bar{\partial}a=b.\]
If we define
\begin{equation}
c^{\ast}_{\alpha_{0}}=b_{\alpha_{0}}-a,\quad\text{on}\quad U_{\alpha_{0}},\quad\forall\alpha_{0}\in I,
\end{equation}
then
\begin{equation}
\begin{cases}
\bar{\partial}c^{\ast}_{\alpha_{0}}=\bar{\partial}b_{\alpha_{0}}-\bar{\partial}a=0, \\
\bar{\partial}b_{\alpha_{0}\alpha_{1}}=b_{\alpha_{1}}-b_{\alpha_{0}}=c^{\ast}_{\alpha_{1}}-c^{\ast}_{\alpha_{0}}.
\end{cases}
\end{equation}
Since $V_{\alpha_{0}}$ is Stein, $U_{\alpha_{0}}=f^{-1}\left(V_{\alpha_{0}}\right)$
is holomorphically convex and it admits a smooth psh exhaustion function $\phi_{\alpha_{0}}$. By \eqref{eq:localfinite} and \eqref{eq:localfinite0}, we can find a smooth convex increasing function $\chi_{\alpha_{0}}$ such that
\[a,~ b_{\alpha_{0}}\in L^{2}_{n,q-1}\left(U_{\alpha_{0}},L,f^{\ast}\sigma,he^{-\chi_{\alpha_{0}}\circ\phi_{\alpha_{0}}}\right),\]
and hence
\begin{equation}
c^{\ast}_{\alpha_{0}}\in L^{2}_{n,q-1}\left(U_{\alpha_{0}},L,f^{\ast}\sigma,he^{-\chi_{\alpha_{0}}\circ\phi_{\alpha_{0}}}\right).
\end{equation}
Assume that $c^{\ast}_{\alpha_{0}\alpha_{1}\cdots\alpha_{q-k-1}}$ ($2\leqslant k\leqslant q-1$) are already obtained in such a way that
\begin{equation*}
\begin{cases}
\bar{\partial}c^{\ast}_{\alpha_{0}\alpha_{1}\cdots\alpha_{q-k-1}}=0, \\
\bar{\partial}b_{\alpha_{0}\alpha_{1}\cdots\alpha_{q-k}}=\sum_{j=0}^{q-k}(-1)^{j}c^{\ast}_{\alpha_{0}\cdots\widehat{\alpha}_{j}\cdots\alpha_{q-k}}, \\
c^{\ast}_{\alpha_{0}\cdots\alpha_{q-k-1}}\in L^{2}_{n,k}\left(U_{\alpha_{0}\cdots\alpha_{q-k-1}},L,f^{\ast}\sigma,he^{-\chi_{\alpha_{0}\cdots\alpha_{q-k-1}}\circ\phi_{\alpha_{0}\cdots\alpha_{q-k-1}}} \right).
\end{cases}
\end{equation*}
Since $\frac{1}{\delta}$ is bounded on $U_{\alpha_{0}\cdots\alpha_{q-k-1}}\Subset X$, again by Theorem \ref{thm:existence}, one can find
\[a_{\alpha_{0}\cdots\alpha_{q-k-1}}\in L^{2}_{n,k-1}\left(U_{\alpha_{0}\cdots\alpha_{q-k-1}},L,f^{\ast}\sigma,he^{-\chi_{\alpha_{0}\cdots\alpha_{q-k-1}}\circ\phi_{\alpha_{0}\cdots\alpha_{q-k-1}}}\right)\]
such that
\begin{equation}
\bar{\partial}a_{\alpha_{0}\cdots\alpha_{q-k-1}}=c^{\ast}_{\alpha_{0}\cdots\alpha_{q-k-1}}.
\end{equation}
Let
\begin{equation}
c^{\ast}_{\alpha_{0}\alpha_{1}\cdots\alpha_{q-k}}=b_{\alpha_{0}\alpha_{1}\cdots\alpha_{q-k}}-\sum_{j=0}^{q-k}(-1)^{j}a_{\alpha_{0}\cdots\widehat{\alpha}_{j}\cdots\alpha_{q-k}}.
\end{equation}
Then
\begin{equation}
\begin{split}
\bar{\partial}c^{\ast}_{\alpha_{0}\alpha_{1}\cdots\alpha_{q-k}}
&=\bar{\partial}b_{\alpha_{0}\alpha_{1}\cdots\alpha_{q-k}}-\sum_{j=0}^{q-k}(-1)^{j}\bar{\partial}a_{\alpha_{0}\cdots\widehat{\alpha}_{j}\cdots\alpha_{q-k}} \\
&=\bar{\partial}b_{\alpha_{0}\alpha_{1}\cdots\alpha_{q-k}}-\sum_{j=0}^{q-k}(-1)^{j}c^{\ast}_{\alpha_{0}\cdots\widehat{\alpha}_{j}\cdots\alpha_{q-k}} \\
&=0.
\end{split}
\end{equation}
It is easy to check
\begin{equation}
\begin{split}
\bar{\partial}b_{\alpha_{0}\alpha_{1}\cdots\alpha_{q-k+1}}
&=\sum_{j=0}^{q-k+1}(-1)^{j}b_{\alpha_{0}\cdots\widehat{\alpha}_{j}\cdots\alpha_{q-k+1}} \\
&=\sum_{j=0}^{q-k+1}(-1)^{j}c^{\ast}_{\alpha_{0}\cdots\widehat{\alpha}_{j}\cdots\alpha_{q-k+1}},
\end{split}
\end{equation}
where the first equality follows from \eqref{eq:dbarb}.
Since $V_{\alpha_{0}\cdots\alpha_{q-k}}$ is also Stein, 
there exists a smooth psh exhaustion function $\phi_{\alpha_{0}\cdots\alpha_{q-k}}$ on $U_{\alpha_{0}\cdots\alpha_{q-k}}=f^{-1}\left(V_{\alpha_{0}\cdots\alpha_{q-k}}\right)$.
By \eqref{eq:localfinite}, we can find a smooth convex increasing function $\chi_{\alpha_{0}\cdots\alpha_{q-k}}$ such that
$a_{\alpha_{0}\cdots\alpha_{q-k-1}}$ and $b_{\alpha_{0}\alpha_{1}\cdots\alpha_{q-k}}$ belong to 
\[L^{2}_{n,k-1}\left(U_{\alpha_{0}\cdots\alpha_{q-k}},L,f^{\ast}\sigma,he^{-\chi_{\alpha_{0}\cdots\alpha_{q-k}}\circ\phi_{\alpha_{0}\cdots\alpha_{q-k}}}\right).\]
Then
\begin{equation}
c^{\ast}_{\alpha_{0}\alpha_{1}\cdots\alpha_{q-k}}\in L^{2}_{n,k-1}\left(U_{\alpha_{0}\cdots\alpha_{q-k}},L,f^{\ast}\sigma,he^{-\chi_{\alpha_{0}\cdots\alpha_{q-k}}\circ\phi_{\alpha_{0}\cdots\alpha_{q-k}}}\right).
\end{equation}
Therefore
\begin{equation*}
\begin{cases}
\bar{\partial}c^{\ast}_{\alpha_{0}\alpha_{1}\cdots\alpha_{q-k}}=0, \\
\bar{\partial}b_{\alpha_{0}\alpha_{1}\cdots\alpha_{q-k+1}}=\sum_{j=0}^{q-k+1}(-1)^{j}c^{\ast}_{\alpha_{0}\cdots\widehat{\alpha}_{j}\cdots\alpha_{q-k+1}}, \\
c^{\ast}_{\alpha_{0}\cdots\alpha_{q-k}}\in L^{2}_{n,k-1}\left(U_{\alpha_{0}\cdots\alpha_{q-k}},L,f^{\ast}\sigma,he^{-\chi_{\alpha_{0}\cdots\alpha_{q-k}}\circ\phi_{\alpha_{0}\cdots\alpha_{q-k}}} \right).
\end{cases}
\end{equation*}
Finally, we can apply Theorem \ref{thm:existence} to find 
\[a_{\alpha_{0}\cdots\alpha_{q-2}}\in L^{2}_{n,0}\left(U_{\alpha_{0}\cdots\alpha_{q-2}},L,f^{\ast}\sigma,he^{-\chi_{\alpha_{0}\cdots\alpha_{q-2}}\circ\phi_{\alpha_{0}\cdots\alpha_{q-2}}}\right)\]
such that
\begin{equation}
\bar{\partial}a_{\alpha_{0}\cdots\alpha_{q-2}}=c^{\ast}_{\alpha_{0}\cdots\alpha_{q-2}}.
\end{equation}
Let
\begin{equation}
c^{\ast}_{\alpha_{0}\alpha_{1}\cdots\alpha_{q-1}}=b_{\alpha_{0}\alpha_{1}\cdots\alpha_{q-1}}-\sum_{j=0}^{q-1}(-1)^{j}a_{\alpha_{0}\cdots\widehat{\alpha}_{j}\cdots\alpha_{q-1}}.
\end{equation}
Then $\bar{\partial}c^{\ast}_{\alpha_{0}\alpha_{1}\cdots\alpha_{q-1}}=0$ and
\begin{equation}
c^{\ast}_{\alpha_{0}\alpha_{1}\cdots\alpha_{q-1}}\in L^{2}_{n,0}\left(U_{\alpha_{0}\cdots\alpha_{q-1}},L,f^{\ast}\sigma,he^{-\chi_{\alpha_{0}\cdots\alpha_{q-1}}\circ\phi_{\alpha_{0}\cdots\alpha_{q-1}}}\right).
\end{equation}
It follows that
\begin{equation}
\left(c^{\ast}_{\alpha_{0}\alpha_{1}\cdots\alpha_{q-1}}\right)\in\prod_{(\alpha_{0},\alpha_{1},\cdots,\alpha_{q-1})\in I^{q}}\Gamma\left(U_{\alpha_{0}\alpha_{1}\cdots\alpha_{q-1}}, K_{X}\otimes L\otimes\mathcal{I}(h)\right).
\end{equation}
By \eqref{eq:wsolvec}, we have
\begin{equation}
c^{\ast}_{\alpha_{0}\alpha_{1}\cdots\alpha_{q}}=\sum_{j=0}^{q}(-1)^{j}b_{\alpha_{0}\cdots\widehat{\alpha}_{j}\cdots\alpha_{q}}=\sum_{j=0}^{q}(-1)^{j}c^{\ast}_{\alpha_{0}\cdots\widehat{\alpha}_{j}\cdots\alpha_{q}}.
\end{equation}
If $c_{\alpha_{0}\alpha_{1}\cdots\alpha_{q-1}}$ are sections of $f_{\ast}\left(K_{X}\otimes L\otimes\mathcal{I}(h)\right)$ over $V_{\alpha_{0}\alpha_{1}\cdots\alpha_{q-1}}$ determined by
\[c^{\ast}_{\alpha_{0}\alpha_{1}\cdots\alpha_{q-1}}=f^{\ast}c^{\ast}_{\alpha_{0}\alpha_{1}\cdots\alpha_{q-1}},\]
then
\begin{equation}
c_{\alpha_{0}\alpha_{1}\cdots\alpha_{q}}=\sum_{j=0}^{q}(-1)^{j}c_{\alpha_{0}\cdots\widehat{\alpha}_{j}\cdots\alpha_{q}}.
\end{equation}
\end{proof}


\section{Remarks}

Let us recall the celebrated positivity theorem of Berndtsson.
Let $(X,\omega)$ be a K\"ahler manifold of dimension $n+m$ and $(Y,\sigma)$ a Hermitian manifold of dimension $m$. Let $p:X\to Y$ be a proper holomorphic map with surjective differential.  Let $(L, h)$ be a holomorphic Hermitian line bundle over $X$.
Then one can obtain a Hermitian vector bundle $(E,\|\cdot\|)$, where $E=p_{\ast}\left(K_{X/Y}\otimes L\right)$ and the metric $\|\cdot\|$ is obtained by integrating over the fibers $X_{t}=p^{-1}(t)$. Assume that $E_{t}=H^{0}\left(X_{t},K_{X_{t}}\otimes L|_{X_{t}}\right)\neq 0$.

\begin{thm}[\cite{Ber09}]\label{thm:positivity}
If the curvature $\mathrm{i}\Theta_{L,h}\geqslant p^{\ast}\sigma$, then $(E,\|\cdot\|)$ is positive in the sense of Nakano.
\end{thm}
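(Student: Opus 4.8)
The plan is to follow Berndtsson's original strategy \cite{Ber09}: compute the Chern curvature of the $L^{2}$-metric on $E$ explicitly and exhibit it as a sum of two manifestly nonnegative contributions. I would work locally over a coordinate polydisc $U\subset\mathbb{C}^{m}$ with coordinates $t=(t_{1},\dots,t_{m})$, so that $p\colon p^{-1}(U)\to U$ and a holomorphic section of $E$ is a family $\{u_{t}\}$ with $u_{t}\in H^{0}(X_{t},K_{X_{t}}\otimes L|_{X_{t}})$. Writing the weight of $h$ as $\phi$, so that $\mathrm{i}\Theta_{L,h}=\mathrm{i}\partial\bar{\partial}\phi=:c(\phi)$, the metric is $\|u\|_{t}^{2}=\int_{X_{t}}c_{n}\,u_{t}\wedge\overline{u_{t}}\,e^{-\phi}$ with $c_{n}=\mathrm{i}^{n^{2}}$. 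Nakano positivity means that for any $u=\sum_{j}u_{j}\otimes\partial/\partial t_{j}$ with $u_{j}\in E_{t}$ one has $\sum_{j,k}\langle\Theta^{E}_{j\bar{k}}u_{j},u_{k}\rangle\geqslant 0$; hence it suffices to fix a point $t_{0}$, fix the $u_{j}$, and compute the curvature there in a holomorphic frame that is normal at $t_{0}$.

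The first step is to express the Chern connection. I would lift each $\partial/\partial t_{j}$ to a $(1,0)$ vector field $V_{j}$ on the total space and represent $u_{t}$ by an $L$-valued $(n,0)$-form $\widetilde{u}$ on $p^{-1}(U)$ that is holomorphic and restricts to $u_{t}$ on each fibre. The covariant derivative $D'_{\partial/\partial t_{j}}u$ is then obtained by differentiating the fibre integral; the naive horizontal derivative need not be fibrewise holomorphic, and its failure to be so is a $\bar{\partial}$-exact term on $X_{t}$. Projecting back onto the space of holomorphic fibre forms forces the solution of a $\bar{\partial}$-equation on each compact fibre, and the resulting correction is precisely the second fundamental form of $E$ inside the trivially-connected bundle of all $L^{2}$ fibre forms; it encodes the Kodaira--Spencer class of the deformation $t\mapsto X_{t}$ contracted against $u_{t}$.

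Differentiating once more and integrating by parts over the fibre, one arrives at Berndtsson's curvature identity, in which the essential positivity is extracted through the Bochner--Kodaira--Nakano identity of Proposition \ref{prop:BKNE} applied on the fibres $X_{t}$ (these are compact since $p$ is proper, hence complete K\"ahler for the restriction of $\omega$, so BKN applies to $L|_{X_{t}}$). The outcome is a decomposition of the schematic form
\[
\sum_{j,k}\langle\Theta^{E}_{j\bar{k}}u_{j},u_{k}\rangle
=\int_{X_{t}} c(\phi)\big(\widetilde{V},\overline{\widetilde{V}}\big)\,c_{n}\,u_{t}\wedge\overline{u_{t}}\,e^{-\phi}
+\big\|\beta\big\|^{2},
\]
where $\widetilde{V}=\sum_{j}\xi_{j}V_{j}$ is the horizontal lift built from the test vector and $\beta$ is the $L^{2}$-norm of the $\bar{\partial}$-corrections, hence $\geqslant 0$ automatically. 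By hypothesis $\mathrm{i}\Theta_{L,h}\geqslant p^{\ast}\sigma\geqslant 0$, so the first integrand is nonnegative as well; moreover $p^{\ast}\sigma$ contracted with the horizontal lifts reproduces $\sigma(\partial/\partial t_{j},\overline{\partial/\partial t_{k}})$, so this term is in fact bounded below by $\sum_{j,k}\sigma_{j\bar{k}}\langle u_{j},u_{k}\rangle\geqslant 0$. Both summands being nonnegative, $(E,\|\cdot\|)$ is Nakano positive, which is the claim.

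The main obstacle is the rigorous derivation of the curvature identity: one must justify differentiation under the fibre integral, control the regularity of the fibrewise $\bar{\partial}$-solution defining the second fundamental form, and carry out the integration by parts so that the Bochner--Kodaira--Nakano term surfaces with the correct sign. All of this rests on the properness of $p$, which makes the fibres compact and the fibrewise $\bar{\partial}$-problem solvable with a genuine harmonic projection. A secondary subtlety is that $p^{\ast}\sigma$ degenerates along the fibres, so one must verify that it is precisely the horizontal contraction $c(\phi)(\widetilde{V},\overline{\widetilde{V}})$ that survives the fibre integral and delivers the stated lower bound, rather than merely semipositivity.
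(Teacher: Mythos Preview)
The paper does not actually supply a proof of this theorem. It is stated as a citation of Berndtsson's result, and the accompanying text simply refers the reader to Section~4 of \cite{Ber09} for the proof in the case $\mathrm{i}\Theta_{L,h}>0$, adding the one-line remark that the weaker hypothesis $\mathrm{i}\Theta_{L,h}\geqslant p^{\ast}\sigma$ suffices and that this ``can be checked by using formula (4.8) in \cite{Ber09} p.550.'' That is the entirety of the paper's treatment.

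Your proposal, by contrast, is a sketch of Berndtsson's original computation itself: lifting base vector fields, building the Chern connection via fibrewise $\bar{\partial}$-correction, and extracting the curvature as a sum of a $\|\beta\|^{2}$ term and an integral of $c(\phi)$ against horizontal lifts. That is indeed the correct architecture of the proof in \cite{Ber09}, and your observation that the horizontal contraction of $p^{\ast}\sigma$ yields the lower bound $\sum_{j,k}\sigma_{j\bar{k}}\langle u_{j},u_{k}\rangle$ is precisely the content of the remark about formula~(4.8). So your approach is not different from the paper's---it is just far more detailed, reconstructing what the paper merely points to. If you are writing this up, be aware that what the paper expects at this spot is a citation plus the one-sentence remark, not a full derivation; your sketch would be appropriate if you were reproving Berndtsson's theorem from scratch, but that is not what the paper does.
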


The reader is referred to Section 4 in \cite{Ber09}, where the theorem was proved in the case $\mathrm{i}\Theta_{L,h}>0$.
However, in order to obtain the Nakano positivity of $E$, it is enough to assume that $\mathrm{i}\Theta_{L,h}\geqslant p^{\ast}\sigma$ instead of $\mathrm{i}\Theta_{L,h}>0$. In fact, this can be checked by using formula (4.8) in \cite{Ber09} p.550.

The following result is a special case of Ohsawa's vanishing theorem (c.f. Theorem \ref{thm:vanishing}). We will show that this statement can be deduced from Nakano vanishing theorem and Berndtsson's theorem.
\begin{thm}
Let $X$ be a K\"ahler manifold and $(Y,\sigma)$ a Hermitian manifold.
Let $p:X\to Y$ be a proper holomorphic map with surjective differential.
Let $(L, h)$ be a holomorphic Hermitian line bundle over $X$.
Assume that $Y$ is weakly pseudoconvex and the curvature $\mathrm{i}\Theta_{L,h}\geqslant p^{\ast}\sigma$, then
\[H^{q}\left(Y, p_{\ast}(K_{X}\otimes L) \right)=0 \quad\text{for}\quad q\geqslant 1.\]
\end{thm}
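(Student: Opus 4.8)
The plan is to rewrite $p_{\ast}(K_{X}\otimes L)$ as a canonical twist of Berndtsson's direct image bundle and then invoke the Nakano vanishing theorem on $Y$.

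First I would peel off the relative canonical bundle: from $K_{X}=K_{X/Y}\otimes p^{\ast}K_{Y}$ one gets $K_{X}\otimes L=(K_{X/Y}\otimes L)\otimes p^{\ast}K_{Y}$, and the projection formula (valid since $K_{Y}$ is locally free) yields
\[p_{\ast}(K_{X}\otimes L)\;\cong\;p_{\ast}(K_{X/Y}\otimes L)\otimes K_{Y}\;=\;E\otimes K_{Y},\]
where $E=p_{\ast}(K_{X/Y}\otimes L)$ is exactly the bundle of Theorem~\ref{thm:positivity}, with fibre $E_{t}=H^{0}(X_{t},K_{X_{t}}\otimes L|_{X_{t}})$ and equipped with the fibrewise $L^{2}$ metric $\|\cdot\|$. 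So the goal reduces to showing $H^{q}(Y,K_{Y}\otimes E)=0$ for $q\geqslant 1$.

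Next I would feed the curvature hypothesis into Berndtsson's theorem: since $\sigma$ is a positive Hermitian metric on $Y$ and $\mathrm{i}\Theta_{L,h}\geqslant p^{\ast}\sigma$, Theorem~\ref{thm:positivity} applies and $(E,\|\cdot\|)$ is Nakano positive; moreover, as in the discussion preceding that theorem (via formula (4.8) of \cite{Ber09}), the non-strict bound $\mathrm{i}\Theta_{L,h}\geqslant p^{\ast}\sigma$ in fact propagates to a strict, uniform Nakano lower bound for $\mathrm{i}\Theta_{E,\|\cdot\|}$ driven by $\sigma\otimes\mathrm{Id}_{E}$. Finally, $(E,\|\cdot\|)$ being a strictly Nakano positive vector bundle over the weakly pseudoconvex K\"ahler manifold $Y$, the Nakano vanishing theorem in the form valid for weakly pseudoconvex K\"ahler manifolds (Demailly, \cite{Dembook}) gives $H^{q}(Y,K_{Y}\otimes E)=H^{\dim Y,\,q}(Y,E)=0$ for $q\geqslant 1$, which together with the first step is the assertion.

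The bookkeeping in the first step and the citations of the two black-box theorems are routine; the points that need care are (i) that $R^{0}p_{\ast}(K_{X/Y}\otimes L)$ is locally free with a smooth non-degenerate $L^{2}$ metric -- so that ``Nakano positive vector bundle'' is literally applicable -- which belongs to Berndtsson's framework, and (ii) that the curvature estimate coming out of Theorem~\ref{thm:positivity} is genuinely strict, uniform Nakano positivity and that $Y$ admits the complete K\"ahler metric needed to run Demailly's $L^{2}$ proof of Nakano vanishing. I expect (ii), and especially the passage from Berndtsson's original hypothesis $\mathrm{i}\Theta_{L,h}>0$ to the weaker $\mathrm{i}\Theta_{L,h}\geqslant p^{\ast}\sigma$, to be the main obstacle; it is exactly what the remark about formula (4.8) in \cite{Ber09} is there to settle.
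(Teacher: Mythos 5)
Your proposal is correct and follows essentially the same route as the paper: reduce to $H^{q}(Y,K_{Y}\otimes E)$ via the projection formula with $E=p_{\ast}(K_{X/Y}\otimes L)$, apply Theorem \ref{thm:positivity} for Nakano positivity of $(E,\|\cdot\|)$, and conclude by the Nakano vanishing theorem on the weakly pseudoconvex $Y$. The only cosmetic difference is that the paper first disposes of the trivial case $H^{0}(X_{t},K_{X_{t}}\otimes L|_{X_{t}})=0$ and invokes positivity before the projection formula, whereas you reverse the order; the substance is identical.
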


\begin{proof}
If $H^{0}\left(X_{t},K_{X_{t}}\otimes L|_{X_{t}}\right)=0$ ($t\in Y$), the statement is trivial. So we may assume $\mathrm{rank} E\geqslant 1$.
By Theorem \ref{thm:positivity}, $E$ is Nakano positivity.
Since $Y$ is weakly pseudoconvex, by Nakano vanishing theorem, we have
\[H^{q}\left(Y, K_{Y}\otimes E\right)=0\quad\text{for}\quad q\geqslant 1.\]
The projection formula implies
\[K_{Y}\otimes E=K_{Y}\otimes p_{\ast}\left(K_{X/Y}\otimes L\right)=p_{\ast}\left(K_{X}\otimes L\right).\]
So we can conclude
\[H^{q}\left(Y, p_{\ast}(K_{X}\otimes L) \right)=0 \quad\text{for}\quad q\geqslant 1.\]
\end{proof}




\begin{thebibliography}{99}

\bibitem{An-Ve65} A. Andreotti, E. Vesentini,
Carleman estimates for the Laplace-Beltrami equation in complex manifolds, 
Publ. Math. I.H.E.S. 25 (1965) 81-130.

\bibitem{Ber09} B. Berndtsson, 
Curvature of vector bundles associated to holomorphic fibrations, 
Ann. of Math. 169 (2009), 531–560. 

\bibitem{Cao14} J. Y. Cao, 
Numerical dimension and a Kawamata-Viehweg-Nadel type vanishing theorem on compact K\"{a}hler manifolds, 
Compositio Math. 150 (2014), 1869-1902.

\bibitem{Dem82} J.-P. Demailly, 
Estimations $L^{2}$ pour l'op\'{e}rateur $\bar{\partial}$ d'un fibr\'{e} vectoriel holomorphe semi-positif au-dessus d'une vari\'{e}t\'{e} k\"{a}hl\'{e}rienne compl\`{e}te, Ann. Sci. \'{E}cole Norm. Sup (4) 15 (1982), 457-511.

\bibitem{Dem94} J.-P. Demailly,
Regularization of closed positive currents of type $(1,1)$ by the flow of a Chern connection, 
Actes du Colloque en l'honneur de P. Dolbeault (Juin 1992), \'edit\'e par H. Skoda et J.-M. Tr\'epreau, Aspects of Mathematics, Vol. E 26, Vieweg, 1994, 105–126.

\bibitem{Dem12} J.-P. Demailly,
Analytic Methods in Algebraic Geometry, 
Surveys of Modern Mathematics 1, International Press, Somerville, MA; Higher Education Press, Beijing, 2012.

\bibitem{Dembook} J.-P. Demailly, 
Complex analytic and differential geometry, 
electronically accessible at 
\url{http://www-fourier.ujf-grenoble.fr/~demailly/books.html}.

\bibitem{DPS00} J.-P. Demailly, Th. Peternell, M. Schneider,
Pseudo-effective line bundles on compact K\"{a}hler manifolds, 
Internat. J. Math. 12 (2001), 689–741.

\bibitem{Fuj18} O. Fujino,
Koll\'ar–Nadel type vanishing theorem,
Southeast Asian Bulletin of Mathematics (2018) 42: 643-646.

\bibitem{Guan-Zhou15a} Q. Guan, X. Zhou, 
A proof of Demailly's strong openness conjecture, 
Ann. of Math, 182 (2015), 605-616.

\bibitem{Guan-Zhou15b} Q. Guan, X. Zhou, 
Effectiveness of Demailly's strong openness conjecture and related problems, 
Invent. Math,  202 (2015), no. 2, 635-676.

\bibitem{Hor65} L. H\"{o}rmander,
$L^{2}$ estimaths and existence theorems for the $\bar{\partial}$ operators, 
Acta Math. 113 (1965), 567-588.

\bibitem{Kol86} J. Koll\'ar,
Higher direct images of dualizing sheaves. I, 
Ann. of Math. (2) 123 (1) (1986) 11–42.

\bibitem{Mat16} S. Matsumura, 
A vanishing theorem of Koll\'ar-Ohsawa type,
 Math. Ann. 366 (3–4) (2016) 1451–1465.

\bibitem{Nadel} A. M. Nadel, 
Multiplier ideal sheaves and existence of K\"{a}hler-Einstein metrics of positive scalar curvature, 
Ann. of math., 132 (1990), 613-625.

\bibitem{Oh84} T. Ohsawa,
Vanishing Theorems on complete K\"{a}hler manifolds,
Publ. RIMS, Kyoto Univ. 20 (1984), 21-38.




\end{thebibliography}
\end{document}